\theoremstyle{plain}
 \newtheorem{theorem}{Theorem}[section]
 \newtheorem*{nonum-theorem}{}
 \newtheorem{lemma}[theorem]{Lemma}
 \newtheorem{proposition}[theorem]{Proposition}
\newtheoremstyle{mystyle}%
    {}
    {}%
    {\itshape}%
    {}%
    {\bfseries}%
    {.}%
    { }%
    {\thmnote{#3}}
 \theoremstyle{mystyle}
\newtheorem{mythm}{}
\theoremstyle{definition}
 \newtheorem{definition}[theorem]{Definition}
 \newtheorem*{ack}{Acknowledgment}
\theoremstyle{remark}
 \newtheorem{remark}[theorem]{Remark}
\DeclareMathOperator{\mk}{M\MRkern K}
\newcommand{\MRkern}{%
  \mkern-5.8mu
  \mathchoice{}{}{\mkern0.2mu}{\mkern0.5mu}%
}
\def\R{\mathbb{R}}
\def\N{\mathbb{N}}
\renewcommand{\S}{\mathbb{S}}
\def\rad{R}
\DeclareMathOperator*{\esssup}{ess\ sup}
\DeclareMathOperator{\spt}{spt}
\DeclareMathOperator{\dist}{\mathrm{d}}
\numberwithin{equation}{section}
\newcommand{\barypower}{\kappa}
\newcommand{\norm}{a}
\newcommand{\rast}{b}
\newcommand{\uvariable}{u}
   \def\MR#1{}
\begin{document}
\title[]{Sliced optimal transport: is it a suitable replacement?}
\author[]{Jun Kitagawa}
\address{Department of Mathematics, Michigan State University, East Lansing, MI 48824}
\email{kitagawa@math.msu.edu}
\author[]{Asuka Takatsu}
\address{Department of Mathematical Sciences, Tokyo Metropolitan University, Tokyo {192-0397}, Japan \&
 RIKEN Center for Advanced Intelligence Project (AIP), Tokyo Japan.}
\email{asuka@tmu.ac.jp}
\date{\today}
\keywords{optimal transport, duality, Radon transform}
\subjclass[2020]{
49Q22, 
44A12}
\vspace{-10pt}
\begin{abstract}
We introduce a one-parameter family of metrics on 
the space of Borel probability measures on Euclidean space with finite $p$th moment for $1\leq p <\infty$, 
called the \emph{sliced Monge--Kantorovich metrics}, which include the sliced Wasserstein and max-sliced Wasserstein metrics. 
We then show that these are complete, separable metric spaces that are topologically equivalent to the classical Monge--Kantorovich metrics and these metrics have a dual representation. However, we also prove these sliced metrics are
 \emph{not} bi-Lipschitz equivalent to the classical ones in most cases, and also the spaces are (except for an endpoint case) \emph{not} geodesic. The completeness, duality, and non-geodesicness are new even in the sliced and max-sliced Wasserstein cases, and non bi-Lipschitz equivalence is only known for a few specific cases. In particular this indicates that sliced and max-sliced Wasserstein metrics are not suitable direct replacements for the classical Monge--Kantorovich metrics in problems where the specific metric or geodesic structure are critical.
\end{abstract}
\maketitle
\section{Introduction}
For $n\in \N$, we denote by $\mathcal{P}(\R^n)$ the space of all Borel probability measures on~$\R^n$, and
for $1\leq p<\infty$ we write $\mathcal{P}_p(\R^n)$ for the subset of~$\mathcal{P}(\R^n)$ consisting of measures with finite $p$th moment. 
Recall for any $m$, $n\in \N$, measure $\mu\in \mathcal{P}(\R^m)$, and  Borel map $T$ defined $\mu$-a.e. from $\R^m$ to~$\R^n$, 
the \emph{pushforward measure} $T_{\sharp} \mu \in \mathcal{P}(\R^n)$ is defined by 
\[
T_{\sharp} \mu (A):=\mu(T^{-1}(A))
\]
for any Borel set $A\subset \R^n$. 
We then denote by $\mk_p^{\R^n}$, the well-known \emph{$p$-Monge--Kantorovich metric} on $\mathcal{P}_p(\R^n)$ from optimal transport theory.
That is, if $\pi_i: \R^n\times \R^n\to \R^n$ is the projection map onto the $i$th coordinate for $i=1,2$,
then for $\mu, \nu\in \mathcal{P}_p(\R^n)$,
\begin{align}
\begin{split}\label{coupling}
 \Pi(\mu,\nu):&=\{\gamma\in \mathcal{P}(\R^n\times \R^n)\mid \pi_1{}_\sharp\gamma=\mu,\ \pi_2{}_\sharp\gamma=\nu\},\\
 \mk_p^{\R^n}(\mu, \nu):&
 =\inf_{\gamma\in \Pi(\mu, \nu)}\left(\int_{\R^n\times \R^n} \lvert x-y\rvert^pd\gamma(x, y)\right)^{\frac1p}.
\end{split}
\end{align}
By~\cite{Villani09}*{Theorem~4.1}, the infimum above is always attained. We will refer to such a minimizer as \emph{a $p$-optimal coupling} between $\mu$ and $\nu$.

It is by now well-known that $\mk_p^{\R^n}$ is a metric on $\mathcal{P}_p(\R^n)$ and provides a framework for a myriad of applications (see, for example,   \cite{Santambrogio15}*{Chapters 4, 7, and 8}, and \cite{Galichon16}). 
However, even over $\R^n$, Monge--Kantorovich metrics are expensive to compute and there is great interest in alternative metric structures on $\mathcal{P}_p(\R^n)$. 

In this paper we introduce a one-parameter family of metrics on $\mathcal{P}_p(\R^n)$ for $n\in \mathbb{N}$ based on optimal transport, 
and show various geometric properties of this family. To introduce this family, denote by $\sigma_{n-1}$ the Riemannian volume measure on the $(n-1)$-dimensional standard unit sphere~$\mathbb{S}^{n-1}$, normalized to be a probability measure.
Also for $\omega\in \S^{n-1}$, we define the map $\rad^\omega: \R^n\to \R$ for $x\in \R^n$ by
\[
 \rad^\omega(x):=\langle x,\omega\rangle.
\]
\begin{definition}\label{SMK}
For $1\leq p < \infty$ and $1\leq q \leq \infty$, we define for any $\mu, \nu\in \mathcal{P}_p(\R^n)$,
\begin{align*}
 \mk_{p, q}(\mu, \nu):&
 =\left\| \mk_p^{\R}(\rad^\bullet_\sharp\mu, \rad^\bullet_\sharp\nu)\right\|_{L^q(\sigma_{n-1})} \\
 &=\begin{cases} 
 \displaystyle
 \left(\int_{\S^{n-1}}\mk_p^{\R}(\rad^\omega_\sharp\mu, \rad^\omega_\sharp\nu)^qd\sigma_{n-1}(\omega)\right)^{\frac{1}{q}},&\text{if\ } 1\leq q<\infty,\\
 \displaystyle
 \esssup_{\omega\in\S^{n-1}}\mk_p^{\R}(\rad^\omega_\sharp\mu, \rad^\omega_\sharp\nu),&\text{if\ } q=\infty.
\end{cases}
\end{align*}
We call $\mk_{p, q}$ the \emph{sliced $(p, q)$-Monge--Kantorovich metric},
and will generically refer to \emph{sliced Monge--Kantorovich metrics}.
\end{definition}
We note that $\rad^\omega_\sharp \mu$ is the Radon transform of $\mu$, and the cases $\mk_{p, p}$ are known as the so-called \emph{sliced Wasserstein metrics}, while $\mk_{p, \infty}$ are often referred to as 
the \emph{max-sliced Wasserstein metrics}. 
See~\cites{BayraktarGuo21, sliced-original, mSWD} 
and the references therein for related works.
To state our main results on the sliced Monge--Kantorovich metrics, 
we first fix some notation.
For a metric space $(X,\dist_X)$,
let $C_b(X)$ denote the space of bounded continuous functions on $X$.
For $1\leq p<\infty$ and $1\leq r' \leq \infty$, 
define 
\begin{align*}
\mathcal{A}_p
&:=\left\{
(\Phi_\bullet, \Psi_\bullet)\in C(\S^{n-1}; C_b(\R))^2
\Biggm|
\begin{tabular}{l}
$-\Phi_\omega(t)-\Psi_\omega(s) \leq |t-s|^p$ \\
for all $t$, $s\in \mathbb{R}$ and $\omega\in \mathbb{S}^{n-1}$
\end{tabular}
\right\},\\
\mathcal{Z}_{r'}
&:=
\left\{ \zeta\in C(\S^{n-1}) \bigm| \lVert \zeta\rVert_{L^{r'}(\sigma_{n-1})}\leq 1,\ \zeta>0
\right\}.
\end{align*}

It is easily seen that when $n=1$, one has $\mk_{p, q}=\mk_p^\R$ for all $p$ and $q$, hence the only interesting cases which we will treat for $\mk_{p, q}$ are when $n\geq 2$.
\begin{mythm}[Main Theorem]\label{thm: main sliced}
Let $1\leq p<\infty$, $1\leq q\leq \infty$ and $n\in\mathbb{N}$. Then:
\begin{enumerate}
\setlength{\leftskip}{-15pt}
\item\label{thm: sliced complete} 
$(\mathcal{P}_p(\mathbb{R}^n), \mk_{p,q})$ is a complete, separable metric space.
\item\label{thm: sliced top equiv} 
$(\mathcal{P}_p(\mathbb{R}^n), \mk_{p,q})$ is  topologically equivalent to $(\mathcal{P}_p(\mathbb{R}^n), \mk_p^{\mathbb{R}^n})$.
\item\label{thm: sliced not metric equiv} 
$(\mathcal{P}_p(\mathbb{R}^n), \mk_{p,q})$ and 
$(\mathcal{P}_p(\mathbb{R}^n), \mk_p^{\mathbb{R}^n})$ are  \emph{not} bi-Lipschitz equivalent when $n\geq 2$ in any of the following cases:
\begin{itemize}
\setlength{\itemindent}{-15pt} 
    \item $q\neq \infty$ and $1\leq p<\infty$.
    \item $q=\infty$, $n=2$, and $p=1$.
    \item $q=\infty$, $n\geq 3$, and $1\leq p<n/2$.
\end{itemize}
\item\label{thm: sliced not geodesic} 
$(\mathcal{P}_p(\mathbb{R}^n), \mk_{p,q})$ is a geodesic space 
if and only if $p=1$ or $n=1$.
\item \label{thm: barycenters}
Assume $p\leq q$.
Fix any $K\in \N$ with $K\geq 2$, $\{\mu_k\}_{k=1}^K\subset\mathcal{P}_p(\R^n)^K$, 
$\lambda_1,\ldots,\lambda_K>0$ such that $\sum_{k=1}^K\lambda_k=1$, and $\barypower\geq 0$. 
Then there exists a minimizer of 
\begin{align*}
    \nu\mapsto \sum_{k=1}^K\lambda_k \mk_{p, q}(\mu_k, \nu)^{\barypower}
\end{align*}
in~$\mathcal{P}_p(\R^n)$.
\item\label{thm: sliced duality} 
Assume $p\leq q$.
Set $r:=q/p$, and denote by $r'$ the H\"older conjugate of $r$.
Then for $\mu$, $\nu\in \mathcal{P}_p(\R^n)$, 
we have
\begin{align*}
\mk_{p, q}(\mu, \nu)^p =
\sup
\left\{
-\int_{\S^{n-1}}\zeta\left(\int_{\R}\Phi_\bullet dR^\bullet_\sharp\mu
+\int_{\R}\Psi_\bullet dR^\bullet_\sharp\nu\right) d\sigma_{n-1}
\Biggm|
(\Phi,\Psi)\in \mathcal{A}_p,\ 
\zeta \in \mathcal{Z}_{r'}
\right\}.
\end{align*}
\end{enumerate}
\end{mythm}
Regarding \nameref{thm: main sliced}~\eqref{thm: sliced not geodesic} above, recall the following definition.
\begin{definition}
Let $(X,\dist_X)$ be a metric space.
A curve $\rho:[0,1]\to X$ is called a
 \emph{minimal geodesic} if 
\begin{equation}\label{geodineq}
\dist_X(\rho(\tau_1), \rho(\tau_2))\leq |\tau_1-\tau_2|\dist_X(\rho(0),\rho(1))
\end{equation}
holds for any $\tau_1$, $\tau_2\in [0,1]$. 
A metric space $(X,\dist_X)$ is said to be \emph{geodesic} 
if any two points in $X$ can be joined by a minimal geodesic.
\end{definition}
Note that due to the triangle inequality, 
 equality actually holds in \eqref{geodineq}
for a minimal geodesic.
\begin{remark}
    The proof of \nameref{thm: main sliced}~\eqref{thm: sliced not metric equiv} actually shows a stronger result holds. Namely, the bi-Lipschitz equivalence also fails for measures whose supports are contained in a fixed bounded set, even with a constant dependent on this fixed set.

    Also, the assumed restrictions in the case $q=\infty$ in \nameref{thm: main sliced}~\eqref{thm: sliced not metric equiv} are due to the proof technique used. Our proof is based on expectation bounds on the $\mk_{1, \infty}$ distance between a measure and its i.i.d. empirical samplings which are of a strongly generic nature, hence its validity is limited in the range of $p$. We strongly believe the same non bi-Lipschitz nature should hold for all values of $p$ when $q=\infty$.
\end{remark}
\begin{remark}\label{rem: p=q duality}
    Upon inspection of the proof in \nameref{thm: main sliced}~\eqref{thm: sliced duality}, it is easy to see that when $p=q$ (hence $r'=\infty$), the maximum value is attained when $\zeta\equiv 1$, thus the supremum over $\zeta$ is not needed.
\end{remark}
\subsection*{Literature}
There is not much in the way of systematic study of optimal transport metrics constructed using the Radon transform, but variants such as sliced and max-sliced Wasserstein are used extensively in applications. As briefly mentioned, these variants are used in various applications as computationally faster substitutes for the classical Monge--Kantorovich metric. However this substitution should be made with care, as \nameref{thm: main sliced}~\eqref{thm: sliced not metric equiv} and~\eqref{thm: sliced not geodesic} indicate that sliced and max-sliced Wasserstein metrics are \emph{not} suitable direct replacements for the classical metrics in problems where the exact metric nature or geodesic properties are of importance. We also note, although the spaces $(\mathcal{P}_1(\R^n), \mk_{1, q})$ are geodesic, the geodesics are simple linear convex combinations and do not actually reflect the underlying geometry of the base space $\R^n$ (see Proposition~\ref{geod}).

It is proved in~\cite{BayraktarGuo21}*{Theorem~2.3} that 
$(\mathcal{P}_p(\mathbb{R}^n), \mk_{p,q})$ with $p=q$ or $q=\infty$ is topologically  equivalent to $(\mathcal{P}_p(\mathbb{R}^n), \mk_p^{\mathbb{R}^n})$ for any $n\in \mathbb{N}$. 
However, it is also shown there that 
$(\mathcal{P}_p(\mathbb{R}^n), \mk_{1,1})$ and 
$(\mathcal{P}_p(\mathbb{R}^n), \mk_1^{\mathbb{R}^n})$ 
are bi-Lipschitz  equivalent to  each other if and only if $n=1$.
On the other hand, it is known that 
$\mk_{p, p}\leq C\mk_p^{\mathbb{R}^n}$ for some constant $C$, 
and it is shown in \cite{Bonnotte}*{Theorem~5.1.5} that 
$\mk_p^{\mathbb{R}^n}\leq C\mk_{p, p}^{1/(n+1)}$ holds for measures with compact supports
for some constant $C$, where the constant depends on the diameter of the union of the two supports 
(see also~\cite{BayraktarGuo21}*{Remark~2.4}).

 The completeness, (non)-geodesicness, and duality in our main result are new even in the sliced and max-sliced Wasserstein cases $p=q$ and $q=\infty$. For the non-geodesicness, we explicitly give two measures not connected by a $\mk_{p, q}$-minimal geodesic. The counterexample exploits that if one has a $\mk_{p, q}$-minimal geodesic $\mu(\tau)$, then the curves $\tau\mapsto \rad^\omega\mu(\tau)$ must be $\mk_p^\R$-minimal geodesics for certain values of $\omega$. The duality result is in some sense the expected one, from applying the classical Kantorovich duality to each pair of one-dimensional transport problems. However, one must take care when making a selection of admissible dual potentials to ensure some reasonable dependence in the $\omega$ variable, we exploit Michael's continuous selection theorem (see Proposition~\ref{Michael56}) to accomplish this. We note that the metric $\mk_{2, \infty}$ is equal to the \emph{$1$-dimensional projection robust $2$-Wasserstein distance} defined in ~\cite{CuturiPaty19}. In \cite{CuturiPaty19} the authors also define what are called \emph{subspace robust $2$-Wasserstein distances}, which are shown to be bi-Lipschitz equivalent to $\mk_2^{\R^n}$ and geodesic, however these are not included in our family $\mk_{p, q}$.

Barycenters in the sliced setting have previously been considered in~\cite{BonneelRabinPeyrePfister15}. We are able to obtain existence of $\mk_{p, q}$-barycenters (i.e. minimizers as in \eqref{thm: barycenters}) by a simple compactness argument, but were unable to prove a duality result (as in \cite{AguehCarlier11}*{Proposition 2.2} for $\mk_p^{\R^n}$-barycenters). This is partly due to the fact that the Radon transform (push-forward under $\rad^\omega$) and its dual transform are ill-behaved on Banach spaces, and are only known to have nice properties on Fr\'echet spaces whose topologies are defined by a countably infinite family of semi-norms (see for example, \cites{Hertle83, Hertle84}). We also note that the Radon transforms of an $\mk_{p, q}$-barycenter are not necessarily the usual $\mk_p^\R$-barycenters of the Radon transforms of the fixed measures (as seen from the proof of \nameref{thm: main sliced}~\eqref{thm: sliced not geodesic}); this is in concordance with~\cite{BonneelRabinPeyrePfister15}*{Proposition 10 (39)}. In a forthcoming work, we will introduce an alternate family of metrics which are geodesic and for which we can show duality for the associated barycenter problems. It will turn out that the spaces $(\mathcal{P}_p(\R^n), \mk_{p,q})$ can be isometrically embedded into certain cases of these alternate spaces.

 The authors have also recently learned of an independent result~\cite{ParkSlepcev23} by Park and Slep{\v c}ev in which they have shown topological properties, completeness, and non-geodesicness of the space $(\mathcal{P}_2(\R^n), \mk_{2, 2})$. The overall scope of this paper differs from that of~\cite{ParkSlepcev23}, as they also consider topics such as tangential structure and statistical properties of $\mk_{2, 2}$, and relations to Sobolev norms, while we consider cases other than $p=q=2$, and duality.

\section{Proofs of main results}\label{sec: sliced}
 We start with a few preliminary properties before moving to the proof of the actual theorem. Throughout this paper, 
we will take $1\leq p<\infty$, $1\leq q \leq \infty$ 
and $n\in \mathbb{N}$ unless stated otherwise. First, we recall here some properties of the usual Monge--Kantorovich metrics for later use. We will write $B_r^{\R^n}(x)$ for the open ball centered at $x\in \R^n$ of radius $r>0$ with respect to the Euclidean distance.
\begin{theorem}[\cite{Villani09}*{Theorem~6.9, Theorem~6.18}]\label{thm: wassconv}
Then $(\mathcal{P}_p(\R^n), \mk_p^{\R^n})$ is a complete, separable metric space.

For a sequence $(\mu_j)_{j\in \mathbb{N}}$ in $\mathcal{P}_p(\R^n)$
and $\mu \in \mathcal{P}_p(\R^n)$, 
the following are equivalent.
\begin{itemize}
\setlength{\leftskip}{-15pt}
\item
$\lim_{j\to \infty} \mk_p^{\R^n}(\mu_j,\mu)=0$.
\item
$(\mu_j)_{j\in \mathbb{N}}$ converges weakly to $\mu$ 
and 
\[
\lim_{j\to \infty} 
\int_{\R^n} \lvert x\rvert^p d\mu_j(x)=
\int_{\R^n} \lvert x\rvert^p d\mu(x).
\]
\item
$(\mu_j)_{j\in \mathbb{N}}$ converges weakly to $\mu$ 
and 
\[
\lim_{r\to\infty}\limsup_{j\to \infty} 
\int_{\R^n\setminus B_r^{\R^n}(0)} \lvert x\rvert^p d\mu_j(x)=0.
\]
\item
For any continuous function on $\phi$ on $\R^n$ with 
$|\phi|\leq C(1+\lvert \cdot\rvert^p)$ 
for some $C\in \mathbb{R}$, 
\[
\lim_{j\to \infty} 
\int_{\R^n} \phi d\mu_j=
\int_{\R^n} \phi d\mu.
\]
\end{itemize}
\end{theorem}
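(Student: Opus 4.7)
The plan is to establish completeness and separability first, then prove the equivalence of the four convergence characterizations via a cycle, e.g.\ $(1) \Rightarrow (2) \Leftrightarrow (3) \Leftrightarrow (4) \Rightarrow (1)$. I expect the main obstacle to be the last implication $(2) \Rightarrow (1)$, where one must upgrade weak convergence plus $p$-moment convergence to convergence in $\mk_p^{\R^n}$, since the cost $|x-y|^p$ is unbounded.

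For separability: the countable set of finitely supported measures with rational weights at points of $\mathbb{Q}^n$ is $\mk_p^{\R^n}$-dense. Given any $\mu \in \mathcal{P}_p(\R^n)$, I would truncate to a large ball (with error controlled by the tail $p$-moment), partition the ball into small cubes, and collapse mass within each cube to a rational center (with error controlled by the cube diameter and quantifiable via an explicit coupling). For completeness: a Cauchy sequence $(\mu_j)$ satisfies $\mk_p^{\R^n}(\mu_j, \delta_0) \leq \mk_p^{\R^n}(\mu_j, \mu_{j_0}) + \mk_p^{\R^n}(\mu_{j_0}, \delta_0)$, which yields a uniform bound on $\int|x|^p d\mu_j$. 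Markov's inequality then gives tightness, so Prokhorov's theorem produces a weakly convergent subsequence with some limit $\mu$; lower semicontinuity of the $p$-moment under weak convergence places $\mu \in \mathcal{P}_p(\R^n)$. The Cauchy property together with the equivalence $(2) \Leftrightarrow (1)$ upgrades this subsequential convergence to full $\mk_p^{\R^n}$-convergence.

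For the equivalences, $(1) \Rightarrow (2)$ uses the Kantorovich--Rubinstein duality to extract weak convergence from the comparison $\mk_1^{\R^n} \leq \mk_p^{\R^n}$ (H\"older on couplings), together with the estimate $\bigl||x|^p - |y|^p\bigr| \leq p \max(|x|,|y|)^{p-1}|x-y|$ and H\"older applied to an optimal coupling to obtain $p$-moment convergence. The equivalence $(2) \Leftrightarrow (3)$ is a Vitali-type statement: given weak convergence, convergence of $\int |x|^p \, d\mu_j$ is equivalent to uniform $p$-integrability of the family (here one uses the layer-cake / truncation characterization of uniform integrability and that $|x|^p$ is continuous and nonnegative). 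The implication $(3) \Rightarrow (4)$ is a standard truncation argument: split $\phi = \phi \chi_{B_R} + \phi \chi_{\R^n \setminus B_R}$ with $\chi$ a smooth cutoff, pass weak convergence on the first piece, and control the second by $C(1+|x|^p)$ against the uniformly integrable tail. Finally $(4) \Rightarrow (2)$ is immediate by taking $\phi(x) = |x|^p$ and any bounded continuous function.

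The delicate step $(2) \Rightarrow (1)$ I would handle via Skorokhod's representation theorem: realize the weakly convergent sequence as almost sure convergence $X_j \to X$ on a common probability space, with $\mathrm{Law}(X_j) = \mu_j$ and $\mathrm{Law}(X) = \mu$. The joint law then gives a (not necessarily optimal) coupling yielding $\mk_p^{\R^n}(\mu_j, \mu)^p \leq \mathbb{E}[|X_j - X|^p]$. Uniform integrability of $|X_j|^p$ (equivalent to $(2)$ via $(3)$) combined with the bound $|X_j - X|^p \leq 2^{p-1}(|X_j|^p + |X|^p)$ allows application of Vitali's convergence theorem to conclude $\mathbb{E}[|X_j - X|^p] \to 0$. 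The key subtlety is that the cost is unbounded, so passage to the limit genuinely requires the moment / uniform integrability information beyond mere weak convergence, and this is precisely what distinguishes $\mk_p^{\R^n}$-convergence from ordinary narrow convergence.
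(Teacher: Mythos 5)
The paper does not supply a proof of Theorem~\ref{thm: wassconv}; it is imported verbatim from Villani's book \cite{Villani09} (Theorems~6.9 and~6.18), so there is no internal proof to compare your attempt against. Your proposal, taken on its own, is essentially correct and covers the right structure: the separability and completeness sketches, the moment-comparison argument for (1)$\Rightarrow$(2) via the mean value estimate $\bigl||x|^p-|y|^p\bigr|\leq p\max(|x|,|y|)^{p-1}|x-y|$ and H\"older on an optimal coupling, the uniform-integrability reformulation in (2)$\Leftrightarrow$(3), the truncation for (3)$\Rightarrow$(4), and the trivial (4)$\Rightarrow$(2). The Skorokhod-plus-Vitali route for the hard implication (2)$\Rightarrow$(1) is a clean probabilistic alternative to the coupling/gluing argument in \cite{Villani09}; what it buys you is that you never need to touch the structure of optimal plans, at the cost of invoking the Skorokhod representation theorem (valid here because $\R^n$ is Polish).

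One point worth tightening in the completeness step: after extracting a weakly convergent subsequence $\mu_{j_k}\rightharpoonup\mu$ with $\mu\in\mathcal{P}_p(\R^n)$, you cannot directly invoke (2)$\Rightarrow$(1) along the subsequence, because you do not yet know that $\int|x|^p\,d\mu_{j_k}\to\int|x|^p\,d\mu$. You need to first deduce uniform $p$-integrability of the tails of $(\mu_j)$ from the Cauchy property (compare each $\mu_j$ to a fixed $\mu_J$ and use that $\mu_J$ has small tail), and then apply (3)$\Rightarrow$(1); alternatively, and more directly, use weak lower semicontinuity of $\mk_p^{\R^n}$ to write $\mk_p^{\R^n}(\mu_{j_m},\mu)\leq\liminf_{k\to\infty}\mk_p^{\R^n}(\mu_{j_m},\mu_{j_k})$ and then let $m\to\infty$ with the Cauchy estimate, which avoids the equivalence list entirely. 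Either repair is routine, but as written the step has a small gap.
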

Next, some notation and conventions. For $1\leq i\leq n$, we write $e_i$ for the $i$th coordinate vector in $\R^n$.
We also denote by $\mathds{1}_E$ the characteristic function of a set $E$. Finally, we will write $\delta^{\R^n}_x$ to denote the delta measure at the point $x$ on $\R^n$.

We will first show continuity on~$\mathbb{S}^{n-1}$ of the function defined by 
$\omega \mapsto \mk_p^{\R}(\rad^\omega_\sharp\mu, \rad^\omega_\sharp\nu)$
for $\mu,\nu\in\mathcal{P}_p(\mathbb{R}^n)$ to ensure that $\mk_{p,q}(\mu,\nu)$ is well-defined. 
This is also proved in ~\cite{BayraktarGuo21}*{Proposition~2.3}, but we provide the proof here for completeness.
\begin{lemma}\label{prelemma}
Let $\mu$, $\nu \in \mathcal{P}_p(\mathbb{R}^n)$. 
\begin{enumerate}
\setlength{\leftskip}{-15pt}
\item\label{moment}
For $\omega \in \mathbb{S}^{n-1}$, 
$\rad^\omega_\sharp \mu \in \mathcal{P}_p(\mathbb{R})$ holds.
Moreover, the $p$th moment of $\rad^\omega_\sharp \mu$ is bounded by the $p$th moment of $\mu$.
\item\label{conti}
For any sequence $(\omega_j)_{j\in \N}$ in $\mathbb{S}^{n-1}$ converging to $\omega$, 
\begin{align*}
\lim_{j\to \infty} 
\mk_p^{\R}(\rad^{\omega_j}_\sharp\mu, \rad^\omega_\sharp\mu)=0,
\end{align*}
in particular, the sequence $(\rad^{\omega_j}_\sharp\mu)_{j\in\mathbb{N}}$ weakly converges to $\rad^{\omega}_\sharp\mu$.
\item\label{twocont}
The function on $\mathbb{S}^{n-1}$ defined by 
\[
\omega \mapsto \mk_p^{\R}(\rad^\omega_\sharp\mu, \rad^\omega_\sharp\nu)
\]
is continuous.
\end{enumerate}
\end{lemma}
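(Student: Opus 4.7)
The plan is to handle the three parts in order, with the main tool being the observation that $R^\omega(x)=\langle x,\omega\rangle$ satisfies the pointwise bound $|R^\omega(x)|\leq |x|$ by Cauchy--Schwarz, and more usefully, that $|R^{\omega_j}(x)-R^\omega(x)|\leq |\omega_j-\omega|\cdot|x|$.

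For part~\eqref{moment}, I would just change variables using the definition of pushforward: the $p$th moment of $R^\omega_\sharp\mu$ equals $\int_{\R^n}|\langle x,\omega\rangle|^p d\mu(x)$, which is dominated by the $p$th moment of $\mu$ via Cauchy--Schwarz. In particular this gives $R^\omega_\sharp\mu\in\mathcal{P}_p(\R)$.

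For part~\eqref{conti}, rather than routing through Theorem~\ref{thm: wassconv} and verifying weak convergence plus moment convergence separately, I would produce a quantitative estimate directly by constructing an explicit coupling. Namely, the joint pushforward $(R^{\omega_j},R^\omega)_\sharp\mu$ is an element of $\Pi(R^{\omega_j}_\sharp\mu,R^\omega_\sharp\mu)$, so that by definition
\begin{align*}
\mk_p^{\R}(R^{\omega_j}_\sharp\mu,R^\omega_\sharp\mu)^p
\leq \int_{\R^n}|R^{\omega_j}(x)-R^\omega(x)|^p d\mu(x)
\leq |\omega_j-\omega|^p\int_{\R^n}|x|^p d\mu(x),
\end{align*}
and the right-hand side vanishes as $\omega_j\to\omega$ since $\mu\in\mathcal{P}_p(\R^n)$. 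The weak convergence statement then follows from $\mk_p^\R$-convergence via Theorem~\ref{thm: wassconv}.

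For part~\eqref{twocont}, I would simply apply the triangle inequality for $\mk_p^\R$:
\begin{align*}
|\mk_p^{\R}(R^{\omega_j}_\sharp\mu,R^{\omega_j}_\sharp\nu)-\mk_p^{\R}(R^\omega_\sharp\mu,R^\omega_\sharp\nu)|
\leq \mk_p^{\R}(R^{\omega_j}_\sharp\mu,R^\omega_\sharp\mu)+\mk_p^{\R}(R^{\omega_j}_\sharp\nu,R^\omega_\sharp\nu),
\end{align*}
and invoke part~\eqref{conti} applied to both $\mu$ and $\nu$. There is no real obstacle here; the only subtlety worth flagging is the necessity of producing an actual coupling (the joint pushforward under $(R^{\omega_j},R^\omega)$) so that one obtains a genuine quantitative, even Lipschitz-in-$\omega$, bound on the Monge--Kantorovich distance, rather than merely qualitative continuity.
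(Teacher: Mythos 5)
Your proof is correct, and for part~\eqref{conti} it takes a genuinely different — and in fact cleaner — route than the paper. The paper establishes $\mk_p^\R(\rad^{\omega_j}_\sharp\mu,\rad^\omega_\sharp\mu)\to 0$ qualitatively: it verifies weak convergence of $\rad^{\omega_j}_\sharp\mu$ to $\rad^\omega_\sharp\mu$ by testing against $C_b(\R)$ functions, separately verifies convergence of the $p$th moments (both by dominated convergence), and then invokes the equivalence in Theorem~\ref{thm: wassconv}. You instead observe that $(\rad^{\omega_j},\rad^\omega)_\sharp\mu$ is an admissible coupling of $\rad^{\omega_j}_\sharp\mu$ and $\rad^\omega_\sharp\mu$, which immediately yields the quantitative bound
\[
\mk_p^\R(\rad^{\omega_j}_\sharp\mu,\rad^\omega_\sharp\mu)
\leq |\omega_j-\omega|\left(\int_{\R^n}|x|^p\,d\mu(x)\right)^{1/p}
= |\omega_j-\omega|\cdot\mk_p^{\R^n}(\delta_0^{\R^n},\mu).
\]
This is stronger than what the paper proves: it shows the map $\omega\mapsto\rad^\omega_\sharp\mu$ is Lipschitz from $\S^{n-1}$ into $(\mathcal{P}_p(\R),\mk_p^\R)$, with constant the $p$th-moment-root of $\mu$, rather than merely continuous. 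Your argument is also more self-contained since it avoids the characterization of $\mk_p$-convergence entirely in part~\eqref{conti} (using it only for the trailing ``in particular'' clause, which is all the paper uses it for anyway). Parts~\eqref{moment} and~\eqref{twocont} coincide with the paper's argument.
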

\begin{proof}
We calculate 
\begin{align*}
\int_{\mathbb{R}} |t|^p d \rad^\omega_\sharp \mu(t)
=
\int_{\mathbb{R}^n} |\langle x,\omega \rangle|^p d \mu(x)
\leq
\int_{\mathbb{R}^n}  |x|^p |\omega|^p d\mu(x)
=\int_{\mathbb{R}^n} |x|^p d\mu(x)
=\mk_p^{\R^n}(\delta_0^{\mathbb{R}^n}, \mu)^p,
\end{align*}
which proves assertion~\eqref{moment}.

Let $(\omega_j)_{j\in \N}$ be a convergent sequence in $\mathbb{S}^{n-1}$ with limit $\omega$.
Then for any $\phi\in C_b(\R)$, we have $\phi(\langle \cdot, \omega_j\rangle)\in C_b(\R^n)$ for each $j\in\mathbb{N}$ hence by dominated convergence,
\begin{align*}
\lim_{j\to \infty}
\int_{\R}\phi d\rad^{\omega_j}_\sharp\mu
=
\lim_{j\to \infty}
\int_{\R^n}\phi(\langle x, \omega_j\rangle)d\mu(x)
= \int_{\R^n}\phi(\langle x,\omega\rangle)d\mu(x)=\int_{\R}\phi d\rad^\omega_\sharp\mu,
\end{align*}
thus $(\rad^{\omega_j}_\sharp\mu)_{j\in\N}$ converges weakly to $\rad^\omega_\sharp\mu$.
Moreover, since $\left| \langle x, \omega_j\rangle \right|^p\leq \lvert x\rvert^p$ for $x\in \mathbb{R}^n$ and $\mu$ has finite $p$th moment, dominated convergence yields 
\begin{align*}
\lim_{j\to \infty}
\int_{\R} |t|^p d\rad^{\omega_j}_\sharp\mu(t)
=
\lim_{j\to \infty} \int_{\R^n}\left| \langle x, \omega_j\rangle \right|^p d\mu(x)
=
\int_{\R^n}\left| \langle x, \omega\rangle \right|^p d\mu(x)
=\int_{\R} |t|^p d\rad^\omega_\sharp\mu(t).
\end{align*}
Then it follows from Theorem~\ref{thm: wassconv} 
that
\begin{align*}
\lim_{j\to \infty} 
\mk_p^{\R}(\rad^{\omega_j}_\sharp\mu, \rad^\omega_\sharp\mu)=0,
\end{align*}
and assertion~\eqref{conti} follows.

Assertion~\eqref{conti} for $\mu,\nu$ together with the triangle inequality for $\mk_p^{\R}$
leads to 
\begin{align*}
\lim_{j\to \infty} \mk_p^{\R}(\rad^{\omega_j}_\sharp\mu, \rad^{\omega_j}_\sharp\nu)
=
\mk_p^{\R}(\rad^\omega_\sharp\mu, \rad^\omega_\sharp\nu),
\end{align*}
proving assertion~\eqref{twocont}.
\end{proof}
Next we give a comparison result between $\mk_p^{\R^n}$ and $\mk_{p,q}$.
For $1\leq q\leq\infty$ and $n\in \mathbb{N}$, set 
\[
M_{q,n}
:=\| \langle e_1,\cdot\rangle \|_{L^q(\sigma_{n-1})}.
\]
Using the fact that (see~\cite{Stroock94}*{(5.2.5.(ii))} for instance) for any $v_1, v_2\in \R^n$ 
\[
 \langle v_1, v_2\rangle=n\int_{\S^{n-1}}\langle v_1, \omega\rangle \langle v_2, \omega\rangle d\sigma_{n-1}(\omega),
\]
we have $M_{2,n}=n^{-1/2}$; additionally it is easy to see that $M_{\infty,n}=1$.
We also observe from the rotational invariance of $\sigma_{n-1}$ that 
\[
M_{q,n} |x|
=\| \langle x,\cdot \rangle \|_{L^q(\sigma_{n-1})}
\]
holds for any $x\in \mathbb{R}^n$.
The following comparison in Lemma~\ref{comparison} is proved for $p=q$ in \cite{Bonnotte}*{Proposition~5.1.3}. Note that a simple application of H\"{o}lder's inequality yields 
\[
p\leq p', q\leq q'
\Rightarrow
\mk_{p, q}
\leq
\mk_{p', q'},
\]
which will be used in the sequel.
\begin{lemma}\label{comparison}
For $\mu,\nu\in \mathcal{P}_p(\R^n)$,
\[
\mk_{p,q}(\mu, \nu)
\leq
M_{\max\{p,q\},n} \cdot
\mk_p^{\R^n}(\mu, \nu).
\]
Additionally, 
for any $x_0\in \mathbb{R}^n$, 
\[
\mk_{p,p}(\delta_{x_0}^{\R^n},\mu)
=
M_{p,n} \cdot
\mk_p^{\R^n}(\delta_{x_0}^{\R^n},\mu).
\]
Consequently, 
$\mathbb{R}^n$ is homothetically embedded into $(\mathcal{P}_p(\mathbb{R}^n), \mk_{p,p})$.
\end{lemma}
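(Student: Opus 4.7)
The plan is to prove the upper bound by pushing an optimal coupling on $\R^n$ down to each slice, and to obtain the Dirac equality from the fact that couplings with a Dirac are forced. Let $\gamma\in \Pi(\mu,\nu)$ be a $p$-optimal coupling, which exists by \cite{Villani09}*{Theorem~4.1}. For any $\omega \in \S^{n-1}$ the pushforward $(\rad^\omega, \rad^\omega)_\sharp \gamma$ lies in $\Pi(\rad^\omega_\sharp\mu, \rad^\omega_\sharp\nu)$, so $\mk_p^\R(\rad^\omega_\sharp\mu, \rad^\omega_\sharp\nu)^p$ is bounded above by the $\gamma$-integral of $|\langle x-y,\omega\rangle|^p$.

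In the case $q = \infty$, the pointwise bound $|\langle x-y,\omega\rangle| \leq |x-y|$ together with $M_{\infty, n} = 1$ yields the desired inequality directly. For $p \leq q < \infty$, I would take the $L^{q/p}$-norm of both sides in $\omega$, so the left side becomes $\mk_{p, q}(\mu, \nu)^p$; since $q/p \geq 1$, Minkowski's integral inequality permits exchanging this norm with the $\gamma$-integral, and the resulting inner $L^q(\sigma_{n-1})$-norm of $\langle x-y,\cdot\rangle$ equals $M_{q,n}|x-y|$ by the rotational invariance identity recorded just before the statement. The remaining $\gamma$-integral is exactly $\mk_p^{\R^n}(\mu,\nu)^p$. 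For $q < p$, I would use the H\"older monotonicity $\mk_{p,q} \leq \mk_{p,p}$ noted just before the statement to reduce to the $q = p$ case, observing that $M_{\max\{p,q\},n} = M_{p,n}$ in this range.

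For the second assertion, any element of $\Pi(\delta_{x_0}^{\R^n}, \mu)$ is forced to equal $\delta_{x_0}^{\R^n} \otimes \mu$, and the same is true after projection by $\rad^\omega$, so the one-dimensional transport cost is explicit and equals $\int_{\R^n} |\langle x-x_0, \omega\rangle|^p d\mu(x)$. Taking the $L^p(\sigma_{n-1})$-norm, applying Fubini--Tonelli to swap the order of integration, and invoking the rotational invariance identity $\|\langle v, \cdot\rangle\|_{L^p(\sigma_{n-1})} = M_{p,n}|v|$ pointwise in the $\mu$-variable produces the claimed equality. The homothetic embedding statement then follows by specializing to $\mu = \delta_y^{\R^n}$, giving $\mk_{p,p}(\delta_x^{\R^n}, \delta_y^{\R^n}) = M_{p,n}|x-y|$.

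There is no real obstacle here beyond some bookkeeping; the only care point is the case split around $q = p$, which dictates whether one applies Minkowski's integral inequality directly (for $q \geq p$) or first invokes the monotonicity of $q\mapsto\mk_{p,q}$ coming from H\"older on the probability space $\S^{n-1}$ (for $q < p$).
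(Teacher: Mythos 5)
Your proof is correct and follows essentially the same approach as the paper: push a $p$-optimal coupling $\gamma$ down through $\rad^\omega\times\rad^\omega$ to get admissible couplings on each slice, then split into the cases $q=\infty$ (pointwise bound $\lvert\langle x-y,\omega\rangle\rvert\leq\lvert x-y\rvert$), $p\leq q<\infty$ (Minkowski's integral inequality to interchange the $L^{q/p}(\sigma_{n-1})$-norm with the $\gamma$-integral), and $q<p$ (monotonicity in $q$), and for the Dirac case note the coupling is forced so the initial inequality is an equality and Tonelli finishes. The paper does the identical case split, with the only cosmetic difference being that you phrase the $q=\infty$ bound as a direct pointwise estimate rather than an $L^\infty$-Minkowski step, which comes to the same thing.
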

\begin{proof}
Let $\gamma$ be a $p$-optimal coupling between $\mu$ and $\nu$, 
then $(\rad^\omega\times \rad^\omega)_\sharp \gamma \in \Pi(\rad^\omega_\sharp \mu,\rad^\omega_\sharp \nu)$
for each $\omega\in \mathbb{S}^{n-1}$.
We calculate
\begin{align}\label{tonelli}
\begin{split}
\mk_{p, q}(\mu, \nu)
&=\left\|\mk_p^{\R}(\rad^\bullet_\sharp\mu, \rad^\bullet_\sharp\nu)\right\|_{L^q(\sigma_{n-1})} \\
&\leq \left\|\left(\int_{\mathbb{R}\times \mathbb{R}} |t-s|^p d(\rad^\bullet\times\rad^\bullet)_\sharp \gamma(t,s)\right)^{\frac{1}{p}}\right\|_{L^q(\sigma_{n-1})}\\
&=\left\|
 \left(\int_{\mathbb{R}^n\times \mathbb{R}^n} |\langle x-y,\cdot\rangle|^p d\gamma(x,y)\right)^{\frac{1}{p}} \right\|_{L^q(\sigma_{n-1})}.
\end{split}
\end{align}
When $q=\infty$, we easily obtain
\begin{align*}
 \left\|
 \left(\int_{\mathbb{R}^n\times \mathbb{R}^n} |\langle x-y,\cdot\rangle|^p d\gamma(x,y)\right)^{\frac{1}{p}} \right\|_{L^\infty(\sigma_{n-1})} 
 &\leq 
 \left(\int_{\mathbb{R}^n\times \mathbb{R}^n} \left\||\langle x-y,\cdot\rangle|^p\right\|_{L^\infty(\sigma_{n-1})} d\gamma(x,y)\right)^{\frac{1}{p}} \\
 &= \left(\int_{\mathbb{R}^n\times \mathbb{R}^n}| x-y|^p d\gamma(x,y)\right)^{\frac{1}{p}}\\
 &=M_{\infty,n}\cdot \mk_p^{\R^n}(\mu, \nu),
\end{align*}
where we have used $M_{\infty,n}=1$.

If $p\leq q<\infty$, 
by Minkowski's integral inequality, we find
\begin{align*}
\mk_{p, q}(\mu, \nu)
\leq &\left\|
 \left(\int_{\mathbb{R}^n\times \mathbb{R}^n} |\langle x-y,\cdot\rangle|^p d\gamma(x,y)\right)^{\frac{1}{p}} \right\|_{L^q(\sigma_{n-1})}\\
&=
 \left[ \int_{\S^{n-1}}
 \left(\int_{\R^n \times \mathbb{R}^n} |\langle x-y,\omega\rangle|^p d\gamma(x,y)\right)^{\frac{q}{p}}
 d\sigma_{n-1}(\omega)\right]^{\frac{p}{q}\cdot\frac1p}\\
&\leq \left[ \int_{\R^n \times \mathbb{R}^n} \left(\int_{\S^{n-1}}|\langle x-y,\omega\rangle|^{q}d\sigma_{n-1}(\omega) \right)^{\frac{p}{q}}d\gamma(x,y)\right]^{\frac{1}{p}}\\
&= \left( \int_{\R^n \times \mathbb{R}^n} 
M_{q,n}^p |x-y|^p d\gamma(x,y)\right)^{\frac{1}{p}}\\
&=M_{q,n}\cdot \mk_p^{\R^n}(\mu, \nu).
\end{align*}
When $q<p$, the above with the aforementioned monotonicity of $\mk_{p, q}$ in $q$ shows 
\[
\mk_{p, q}(\mu, \nu) 
\leq
\mk_{p, p}(\mu, \nu) 
\leq
M_{p,n}\cdot \mk_p^{\R^n}(\mu, \nu).
\]

Since we have
\[
\Pi(\delta_{x_0}^{\R^n},\mu)=\{ \delta_{x_0}^{\R^n} \otimes \mu \}, 
\quad
\Pi(\rad^\omega_\sharp \delta_{x_0}^{\R^n},\rad^\omega_\sharp \mu)
=\{( \rad^\omega_\sharp \delta_{x_0}^{\R^n})\otimes ( \rad^\omega_\sharp \mu) \}=\{(\rad^\omega\times\rad^\omega)_\sharp (\delta_{x_0}^{\R^n}\otimes\mu)\}
\]
for each $x_0\in\mathbb{R}^n$ and $\omega \in \mathbb{S}^{n-1}$,
the inequality in \eqref{tonelli} becomes an equality 
and this proves the lemma.
\end{proof}
\begin{remark}\label{rem}
By Lemma~\ref{comparison}, for $\mu$, $\nu\in \mathcal{P}_p(\R^n)$, and $x_0\in \mathbb{R}^n$, 
if $\mk_p^{\R^n}(\delta_{x_0}^{\R^n},\mu)=\mk_p^{\R^n}(\delta_{x_0}^{\R^n}, \nu)$, 
then $\mk_{p, p}(\delta_{x_0}^{\R^n},\mu)=\mk_{p, p}(\delta_{x_0}^{\R^n}, \nu)$.
However, this does not hold for $p\neq q$ and $n\geq 2$ in general.
Indeed, if we take 
\[
\mu:=\delta_{e_1}^{\mathbb{R}^n}, \quad
\nu:=\frac12\left(\delta_{e_1}^{\R^n}+\delta_{e_2}^{\mathbb{R}^n}\right), \quad
\]
and $p\neq q$, then 
\begin{align*}
\mk_{p, q}(\delta_0^{\mathbb{R}^n},\mu)
&=\left\|\mk_p^{\R}( \delta_{0}^{\mathbb{R}},\delta_{\langle e_1,\bullet\rangle}^\R )
\right\|_{L^q(\sigma_{n-1})}
=\left\|\langle e_1,\cdot\rangle\right\|_{L^q(\sigma_{n-1})}
=M_{q,n},\\
\mk_{p, q}(\delta_0^{\mathbb{R}^n},\nu)
&=\left\|\mk_p^{\R}\left( \delta_{0}^{\mathbb{R}},\frac12\left(\delta_{\langle e_1,\bullet\rangle}^\R+\delta_{\langle e_2,\bullet\rangle}^\R\right) \right)\right\|_{L^q(\sigma_{n-1})}
=\left\| \left( \frac12 |\langle e_1,\cdot\rangle|^p+\frac12|\langle e_2,\cdot\rangle|^p \right)^{\frac{1}{p}}\right\|_{L^q(\sigma_{n-1})}.
\end{align*}
We find that $\mk_{p, \infty}(\delta_0^{\mathbb{R}^n},\nu )<1=M_{\infty,n}$.
If $p<q<\infty$, then it follows from strict convexity of the function $t\mapsto t^{q/p}$ on $(0,\infty)$ that
\begin{align*}
\mk_{p, q}(\delta_0^{\mathbb{R}^n},\nu )^q
&=
\int_{\mathbb{S}^{n-1}} \left( \frac12 |\langle e_1,\omega\rangle|^p+\frac12|\langle e_2,\omega\rangle|^p \right)^{\frac{q}{p}} d\sigma_{n-1}(\omega)\\
&<\int_{\mathbb{S}^{n-1}} \left( \frac12 |\langle e_1,\omega\rangle|^q +\frac12|\langle e_2,\omega\rangle|^q \right) d\sigma_{n-1}(\omega)
=M_{q,n}^q.
\end{align*}
If $1\leq q<p$, then we similarly observe from strict concavity of the function $t\mapsto t^{q/p}$ on $(0,\infty)$ that $\mk_{p, q}(\delta_0^{\mathbb{R}^n},\nu)>M_{q,n}$.
Thus we find 
\[
\mk_p^{\R^n}(\delta_0^{\mathbb{R}^n},\mu)=\mk_p^{\R^n}(\delta_0^{\mathbb{R}^n},\nu )=1, \qquad
\mk_{p, q}(\delta_0^{\mathbb{R}^n},\mu ) \neq \mk_{p, q}(\delta_0^{\mathbb{R}^n},\nu).
\]
\end{remark}
\subsection{Complete, separable, metric.}
In this subsection we will prove $(\mathcal{P}_p(\R^n), \mk_{p, q})$ is a complete, separable metric space. Before the proof, we make a quick remark which will be used a number of times.
\begin{remark}\label{rmk: mk compact}
By~\cite{AmbrosioGigliSavare08}*{Remark 5.1.5}, any sequence $(\mu_j)_{j\in \N}$ in $\mathcal{P}_p(\R^n)$ with uniformly bounded $p$th moments (or equivalently, is bounded in $(\mathcal{P}_p(\R^n), \mk_p^{\R^n})$) has a subsequence that weakly converges to some $\mu\in \mathcal{P}(\R^n)$. Then since 
   $\mk_p^{\R^n}( \delta_{0}^{\R^n},\mu_j)^p$ is the $p$th moment of $\mu_j$, by the weak lower-semicontinuity of $\mk_p^{\R^n}$ (\cite{Villani09}*{Remark 6.12}) the limiting measure $\mu$ also has finite $p$th moment.
\end{remark}
\begin{proof}[Proof of \nameref{thm: main sliced}~\eqref{thm: sliced complete}]
\ \\[-10pt]

\noindent
\textbf{(Metric):} Let $\mu, \nu\in \mathcal{P}_p(\R^n)$.
From the definition it is immediate that 
\[
\mk_{p, q}(\mu, \nu)=\mk_{p, q}(\nu, \mu)
\geq 0,
\]
and since $\mk_p^{\R}$ is a metric on $\mathcal{P}_p(\R)$, that $\mk_{p, q}(\mu, \mu)=0$.
Now suppose $\mk_{p, q}(\mu, \nu)=0$.
Then for $\sigma_{n-1}$-a.e.\ $\omega$ we have
$\mk_p^\R(\rad^\omega_\sharp\mu, \rad^\omega_\sharp\nu)=0$, 
hence $\rad^\omega_\sharp\mu=\rad^\omega_\sharp\nu$. 
Writing $\mathcal{F}_\R^{-1}$ and $\mathcal{F}_{\R^n}^{-1}$ for the inverse Fourier transforms on $\R$ and $\R^n$ respectively, a quick calculation yields that for any $r>0$ and 
$\sigma_{n-1}$-a.e.\ $\omega$,
\begin{align*}
 \mathcal{F}_{\R^n}^{-1} \mu(r\omega)&=\int_{\R^n}e^{i\langle r\omega, x\rangle} d\mu(x)
 =\int_\R e^{i r t}d \rad^\omega_\sharp\mu(t)=\mathcal{F}_\R^{-1} (\rad^\omega_\sharp\mu)(r)\\
 &=\mathcal{F}_\R^{-1} (\rad^\omega_\sharp\nu)(r)
 =\int_\R e^{i r t}d \rad^\omega_\sharp\nu(t)
 =\int_{\R^n}e^{i\langle r\omega, x\rangle} d\nu(x)=\mathcal{F}_{\R^n}^{-1} \nu(r\omega).
\end{align*}
Since a probability measure is uniquely determined by its inverse Fourier transform, (
see~\cite{Bogachev07}*{Proposition~3.8.6} for instance), we have $\mu=\nu$. 
For the triangle inequality, 
using the triangle inequality for $\mk_p^\R$ together with Minkowski's inequality, we have for $\mu_1, \mu_2, \mu_3\in \mathcal{P}_p(\R^n)$,
\begin{align*}
 \mk_{p, q}(\mu_1, \mu_3)&=\left\| \mk_p^{\R}(\rad^\bullet_\sharp\mu_1, \rad^\bullet_\sharp\mu_3)\right\|_{L^q(\sigma_{n-1})}\\
 &\leq \left\| \mk_p^{\R}(\rad^\bullet_\sharp\mu_1, \rad^\bullet_\sharp\mu_2)+\mk_p^{\R}(\rad^\bullet_\sharp\mu_2, \rad^\bullet_\sharp\mu_3)\right\|_{L^q(\sigma_{n-1})}\\
 &\leq \left\| \mk_p^{\R}(\rad^\bullet_\sharp\mu_1, \rad^\bullet_\sharp\mu_2)\right\|_{L^q(\sigma_{n-1})}+\left\|\mk_p^{\R}(\rad^\bullet_\sharp\mu_2, \rad^\bullet_\sharp\mu_3)\right\|_{L^q(\sigma_{n-1})}\\
 &=\mk_{p, q}(\mu_1, \mu_2)+\mk_{p, q}(\mu_2, \mu_3).
\end{align*}

\noindent
\textbf{(Separability):} 
Due to Theorem~\ref{thm: wassconv}, $(\mathcal{P}_p(\R^n), \mk_p^{\mathbb{R}^n})$ is separable 
and there exists a countable dense set $\mathcal{D}$ of $(\mathcal{P}_p(\R^n), \mk_p^{\mathbb{R}^n})$.
By Lemma~\ref{comparison}, 
the set $\mathcal{D}$ is also dense in $(\mathcal{P}_p(\R^n), \mk_{p,q})$ 
hence $(\mathcal{P}_p(\R^n), \mk_{p,q})$ is separable.

\noindent
\textbf{(Completeness):} 
Let $(\mu_j)_{j\in \N}$ be a Cauchy sequence in $(\mathcal{P}_p(\R^n), \mk_{p,q})$. 
We prove the completeness of $(\mathcal{P}_p(\R^n), \mk_{p,q})$ in several steps.

\noindent
{\bf Claim~1.}
There exists $\Omega_{p, q} \subset \mathbb{S}^{n-1}$ such that 
$\sigma_{n-1}(\Omega_{p, q})=1$ and $(\rad^\omega_\sharp\mu_j)_{j\in \N}$ is Cauchy in $\mk_p^\R$ for any $\omega\in \Omega_{p, q}$.\\
{\it Proof of Claim~$1$.}
If $q=\infty$, then the claim is trivial.
In the case $q\neq\infty$, for any $\varepsilon_1, \varepsilon_2>0$, there exists some $J\in \N$ such that whenever $j_1, j_2\geq J$, we have $\mk_{p, q}(\mu_{j_1}, \mu_{j_2})<\varepsilon_1\varepsilon_2$. 
It follows from Chebyshev's inequality that 
\begin{align*}
 \sigma_{n-1}\left(\{\omega\in \S^{n-1}\mid \mk_p^\R(\rad^\omega_\sharp\mu_{j_1}, \rad^\omega_\sharp\mu_{j_2}) \geq \varepsilon_1 \}\right)
 \leq &\varepsilon_1^{-q}\int_{\S^{n-1}} 
 \mk_p^\R(\rad^\omega_\sharp\mu_{j_1}, \rad^\omega_\sharp\mu_{j_2})^qd\sigma_{n-1}(\omega)\\
 = &\varepsilon_1^{-q}\mk_{p, q}(\mu_{j_1}, \mu_{j_2})^q\\
 <&\varepsilon_2^q,
\end{align*}
for $j_1, j_2\geq J$. 
Now we can take a subsequence of $(\mu_j)_{j\in\mathbb{N}}$ (not relabeled) such that for all~$j\in\mathbb{N}$,
\begin{align*}
\sigma_{n-1}(\left\{\omega\in \S^{n-1}\mid \mk_p^\R(\rad^\omega_\sharp\mu_{j}, \rad^\omega_\sharp\mu_{j+1})\geq 2^{-j} \right\})\leq 2^{-j}.
\end{align*}
Setting 
\[
\Omega_{p, q}:=\S^{n-1}\setminus \left(\bigcap_{\ell=1}^\infty\bigcup_{j=\ell}^\infty \left\{\omega\in \S^{n-1}\Bigm| \mk_p^\R(\rad^\omega_\sharp\mu_{j}, \rad^\omega_\sharp\mu_{j+1})\geq 2^{-j} \right\}\right),
\]
we have
\begin{align*}
 \sigma_{n-1}(\Omega_{p, q})= 1-\sigma_{n-1}\left(\bigcap_{\ell=1}^\infty\bigcup_{j=\ell}^\infty \left\{\omega\in \S^{n-1}\Bigm| \mk_p^\R(\rad^\omega_\sharp\mu_{j}, \rad^\omega_\sharp\mu_{j+1})\geq 2^{-j} \right\}\right)=1
\end{align*}
by the Borel--Cantelli lemma, and we can see that the sequence $(\rad^\omega_\sharp\mu_j)_{j\in \N}$ is Cauchy in $\mk_p^\R$ whenever $\omega\in \Omega_{p, q}$. 
\hfill $\diamondsuit$

Since $\mk_p^\R$ is complete on $\mathcal{P}_p(\R)$ by Theorem~\ref{thm: wassconv}, 
for every $\omega\in \Omega_{p, q}$, 
there exists a measure $\mu^\omega\in \mathcal{P}_p(\R)$ such that 
\[
\lim_{j\to\infty}\mk_p^\R(\rad^\omega_\sharp\mu_j, \mu^\omega)=0,
\]
and 
$(\rad^\omega_\sharp\mu_j)_{j\in\mathbb{N}}$ weakly converges to $\mu^\omega$.

\noindent
{\bf Claim~2.} $(\mu_j)_{j\in \N}$ is a tight sequence.\\
{\it Proof of Claim~$2$.} Since $\sigma_{n-1}(\Omega_{p, q})=1$, the set $\Omega_{p, q}$ must contain some linearly independent collection $\{\omega_i\}_{i=1}^n$. 
For each~$1\leq i\leq n$, since $(\rad^{\omega_i}_\sharp\mu_j)_{j\in \N}$ weakly converges the sequence is tight, hence for any fixed $\varepsilon>0$, there exist $r_{i, \varepsilon}>0$ such that 
\[
\rad^{\omega_i}_\sharp\mu_j\left(\mathbb{R}\setminus[-r_{i, \varepsilon},r_{i, \varepsilon}]\right)<\varepsilon
\] 
for all $j\in \mathbb{N}$. 
By the independence of $\{\omega_i\}_{i=1}^n$, the set given by
\begin{align*}
 \bigcap_{i=1}^n\{x\in \R^n\mid \langle x, \omega_i\rangle \in [r_{i,\varepsilon}, r_{i,\varepsilon}] \},
\end{align*}
is compact and we see that
\begin{align*}
 \mu_j\left( 
 \R^n\setminus
 \bigcap_{i=1}^n\{ x\in \R^n\mid\langle x, \omega_i\rangle \in [r_{i,\varepsilon}, r_{i,\varepsilon}] \}\right)
 &=\mu_j\left(\bigcup_{i=1}^n\{x\in \R^n\mid \langle x, \omega_i\rangle \notin [-r_{i, \varepsilon},r_{i, \varepsilon}]\}\right)\\
 &\leq \sum_{i=1}^n\mu_j\left(\left\{x\in \R^n\mid \langle x, \omega_i\rangle\notin [-r_{i, \varepsilon},r_{i, \varepsilon}]\right\}\right)\\
 &=\sum_{i=1}^n \rad^{\omega_i}_\sharp\mu_j(
 \mathbb{R}\setminus[-r_{i, \varepsilon},r_{i, \varepsilon}])\\
 &<n\varepsilon,
\end{align*}
proving tightness.\hfill $\diamondsuit$

It follows from Claim~2 that there exists a probability measure $\mu\in \mathcal{P}(\R^n)$ and a subsequence (not relabeled) of $(\mu_j)_{j\in \N}$ that weakly converges to $\mu$.
Then $(\rad^\omega_\sharp\mu_j)_{j\in\mathbb{N}}$ converges weakly to $\rad^\omega_\sharp\mu$. 
By uniqueness of weak limits, we must have $\mu^\omega=\rad^\omega_\sharp\mu$ for all $\omega\in \Omega_{p, q}$, 
so in particular we have 
\begin{equation}\label{a}
\lim_{j\to\infty} \mk_p^\R(\rad^\omega_\sharp\mu_j, \rad^\omega_\sharp\mu)=0
\end{equation}
for $\omega \in \Omega_{p, q}$.

\noindent
{\bf Claim~3.} 
The $p$th moment of $(\mu_j)_{j\in \mathbb{N}}$ 
is uniformly bounded and
$\mu$ has finite $p$th moment.\\ 
{\it Proof of Claim~$3$.}
Let $\{\omega_i\}_{i=1}^n\subset \Omega_{p, q}$ be linearly independent. 
We denote by $(\omega_{ii'})_{1\leq i,i'\leq n}$  the Gram matrix of $\{\omega_i\}_{i=1}^n$, 
that is, 
$(\omega_{ii'})_{1\leq i,i'\leq n}=(\langle \omega_i, \omega_i' \rangle)_{1\leq i,i'\leq n}$.
Then $(\omega_{ii'})_{1\leq i,i'\leq n}$ is invertible and we denote by 
$(\omega^{ii'})_{1\leq i,i'\leq n}$ its inverse matrix.
Since any norm on $\mathbb{R}^n$ is equivalent to the Euclidean norm, 
for $p$ and $n$, there exists $C_{n,p}>0$ such that 
\[
\sum_{i=1}^n a_i^2
\leq
\left(C_{n,p}\sum_{i=1}^n a_i^p\right)^{\frac{2}{p}}\quad
\text{for } a_i\in [0,\infty),
\ \text{ each } 1\leq i\leq n.
\]
For $x\in \mathbb{R}^n$, we have 
\[
x=\sum_{i=1}^n \sum_{i'=1}^n \omega^{ii'} \langle x,\omega_{i'}\rangle \omega_i 
\]
which yields 
\begin{align}\label{2top}
\begin{split}
|x|^2
&\leq n\sum_{i=1}^n \left( \sum_{i'=1}^n \lvert\omega^{ii'}\rvert\lvert \langle x,\omega_{i'}\rangle\rvert\right)^2\\
&\leq 
n^{3} \max_{1\leq i',i'' \leq n} |\omega^{i'i''}|^{2}
\sum_{i=1}^n |\langle x,\omega_{i}\rangle|^2 \\
&\leq
n^{3} \max_{1\leq i',i'' \leq n} |\omega^{i'i''}|^{2}
\left( C_{n,p}  \sum_{i=1}^n |\langle x,\omega_{i}\rangle|^p \right)^{\frac{2}{p}}.
\end{split}
\end{align}
This ensures
\begin{align*}
\limsup_{j\to\infty}
\int_{\mathbb{R}^n} 
|x|^pd\mu_j(x)
&\leq 
n^{\frac{3p}{2}} \max_{1\leq i',i'' \leq n} |\omega^{i'i''}|^{p}
\limsup_{j\to\infty}
\int_{\mathbb{R}^n} 
C_{n,p} \sum_{i=1}^n \left| \langle x, \omega_{i} \rangle\right|^p
d\mu_j(x) \\
&=
n^{\frac{3p}{2}} C_{n,p} \max_{1\leq i',i'' \leq n} |\omega^{i'i''}|^{p}
\sum_{i=1}^n
\limsup_{j\to\infty}
\int_{\mathbb{R}} |t|^p d \rad_\sharp^{\omega_{i}}\mu_j(t)\\
&=n^{\frac{3p}{2}} C_{n,p} \max_{1\leq i',i'' \leq n} |\omega^{i'i''}|^{p}
\sum_{i=1}^n
\int_{\mathbb{R}} |t|^p d \rad_\sharp^{\omega_{i}}\mu(t)<\infty,
\end{align*}
where the last equality follows from Theorem~\ref{thm: wassconv}.
Then by Remark~\ref{rmk: mk compact} 
we see that $\mu$ has finite $p$th moment.
\hfill $\diamondsuit$

\noindent
{\bf Claim~4.} $\mk_{p, q}(\mu_j, \mu) \to0$ as $j\to\infty$.\\
{\it Proof of Claim~$4$.}
First suppose $q=\infty$ and fix $\varepsilon>0$. 
Then there exists some $J\in\mathbb{N}$ such that for $\sigma_{n-1}$-a.e. $\omega \in \mathbb{S}^{n-1}$,
all $j\geq J$, and any $\ell\in\mathbb{N}$,
\begin{align*}
\mk_p^{\R}(\rad^{\omega}_\sharp\mu_j,\rad^{\omega}_\sharp\mu_{j+\ell})
\leq 
\esssup_{\omega'\in\S^{n-1}}
\mk_p^{\R}(\rad^{\omega'}_\sharp\mu_j,\rad^{\omega'}_\sharp\mu_{j+\ell})
= \mk_{p,\infty}(\mu_j, \mu_{j+\ell})
 <\varepsilon.
\end{align*}
By \eqref{a},
we have
\[
\mk_p^{\R}(\rad^{\omega}_\sharp\mu_j,\rad^{\omega}_\sharp\mu)
=
\lim_{\ell\to \infty}\mk_p^{\R}(\rad^{\omega}_\sharp\mu_j,\rad^{\omega}_\sharp\mu_{j+\ell})
 <\varepsilon,
\]
then taking an essential supremum over $\omega$ shows
\[
 \mk_{p,\infty}(\mu_j, \mu)<\varepsilon,
\]
proving the claim.
If $q\neq\infty$, then for all $\omega\in \Omega_{p, q}$
\begin{align*}
\mk_p^\R( \rad^\omega_\sharp\mu_j, \rad^\omega_\sharp\mu)
&\leq \left( \int_{\R\times \R}
\left| t-s\right|^p 
d(\rad^\omega_\sharp\mu_j\otimes \rad^\omega_\sharp\mu)(t,s)\right)^{\frac{1}{p}}\\
&=\left(\int_{\R^n\times \R^n}\left| \langle x-y,\omega\rangle\right|^p d(\mu_j\otimes \mu)(x, y)\right)^{\frac{1}{p}}\\
&\leq 
2^{1-\frac{1}{p}}\left(\int_{\R^n}\left| x\right|^pd\mu_j(x)
+\int_{\R^n}\left|y\right|^p d\mu( y)\right)^{\frac{1}{p}},
\end{align*}
which we have shown is bounded independent of $j$. 
Thus if $q\neq\infty$, by the dominated convergence theorem together with~\eqref{a}, we obtain
\begin{align*}
\lim_{j\to\infty} \mk_{p, q}(\mu_j, \mu)
&=
\lim_{j\to\infty} \left(\int_{\mathbb{S}^{n-1}} \mk_p^\R(\rad^\omega_\sharp\mu_j, \rad^\omega_\sharp\mu)^q d\sigma_{n-1}(\omega)\right)^{\frac{1}{q}}= 0
 \end{align*}
as claimed.
\hfill $\diamondsuit$

Since the original sequence is Cauchy in $\mk_{p, q}$, convergence of a subsequence implies convergence of the full sequence to the same limit, and we are finished with the proof of completeness.
\end{proof}
\subsection{Topological equivalence}
We now prove topological equivalence of $\mk_p^{\R^n}$ and $\mk_{p, q}$. In~ \cite{BayraktarGuo21}*{Theorem~2.3} it is shown that $\mk_1^{\R^n}$, $\mk_{1, 1}$, and $\mk_{1, \infty}$ are all topologically equivalent.
\begin{proof}[Proof of \nameref{thm: main sliced}~\eqref{thm: sliced top equiv}]
By Lemma~\ref{comparison}, any sequence that converges in $\mk_p^{\R^n}$ converges in $\mk_{p, q}$ to the same limit, for any $p$ and $q$.

Let $(\mu_j)_{j\in N}$ be a convergent sequence in $(\mathcal{P}_p(\R^n), \mk_{p,q})$
with limit $\mu$.
If $\Omega_{p, q}\subset \S^{n-1}$ is the set from Claim~1 in the proof of \nameref{thm: main sliced},  
there exists a linearly independent set $\{\omega_i\}_{i=1}^n\subset \Omega_{p, q}$ such that by Theorem~\ref{thm: wassconv},
\[
\lim_{r\to\infty}\limsup_{j\to\infty}
\int_{\mathbb{R}\setminus (-r, r)} |t|^p d \rad_\sharp^{\omega_i}\mu_j(t) \\
=0
\]
for $1\leq i\leq n$.
Set
\[
A_i:=\left\{x\in \mathbb{R}^n \mid \lvert\langle x,\omega_i\rangle\rvert
\geq \lvert\langle x,\omega_{i'}\rangle\rvert
\ \text{for }1\leq i'\leq n\right\}.
\]
For $x\in  A_{i_0} \setminus B_r^{\R^n}(0)$ with $1\leq i_0\leq n$ 
and $r>0$, 
we observe from \eqref{2top} that
\[
r^2<|x|^2\leq
n^3 \max_{1\leq i',i''\leq n} \lvert\omega^{i'i''}\rvert^2
\sum_{i=1}^n |\langle x,\omega_{i}\rangle|^2
\leq
n^4 \left( \max_{1\leq i',i''\leq n} \lvert\omega^{i'i''}\rvert^2\right)
|\langle x,\omega_{i_0}\rangle|^2.
\]
This leads to
\[
A_{i_0} \setminus B_r^{\R^n}(0)
\subset \left\{x\in \mathbb{R}^n \bigm|
 |\langle x,\omega_{i_0}\rangle|>\hat{c}^{-1}r
 \right\}
\]
for $r>0$,
where we set 
\[
\hat{c}=\hat{c}(\{\omega_{i}\}_{i=1}^n):=n^2\cdot  \max_{1\leq i', i'' \leq n} |\omega^{i'i''}|,
\]

then using~\eqref{2top},
\begin{align*}
\limsup_{j\to\infty}
\int_{\mathbb{R}^n\setminus B_r^{\R^n}(0)} 
|x|^pd\mu_j(x)
&\leq 
n^{-\frac{p}{2}}\cdot C_{n,p}\hat{c}^{p}
 \cdot 
\limsup_{j\to\infty}
\int_{\mathbb{R}^n\setminus B_r^{\R^n}(0)} 
\sum_{i'=1}^n \left| \langle x,\omega_{i'} \rangle\right|^p
d\mu_j(x) \\
&\leq 
n^{-\frac{p}{2}}\cdot C_{n,p}\hat{c}^{p} \cdot 
\limsup_{j\to\infty}
\sum_{i=1}^n
\int_{A_i\setminus B_r^{\R^n}(0)} 
 \sum_{i'=1}^n \left| \langle x,\omega_{i'} \rangle\right|^p
d\mu_j(x) \\
&
\leq 
n^{-\frac{p}{2}}\cdot C_{n,p}\hat{c}^{p} \cdot 
\limsup_{j\to\infty}
\sum_{i=1}^n
\int_{A_i\setminus B_r^{\R^n}(0)} 
n \left| \langle x,\omega_{i} \rangle\right|^p d\mu_j(x) \\
&
\leq 
n^{1-\frac{p}{2}}\cdot C_{n,p}\hat{c}^{p} \cdot 
\limsup_{j\to\infty}
\sum_{i=1}^n
\int_{\mathbb{R}\setminus (-\hat{c}^{-1}r, \hat{c}^{-1}r)} |t|^p d \rad_\sharp^{\omega_i}\mu_j(t) \\
&\xrightarrow{r\to \infty} 0.
\end{align*}
Thus by Theorem~\ref{thm: wassconv} we have the convergence 
of $(\mu_j)_{j\in N}$ in 
$(\mathcal{P}_p(\R^n), \mk_p^{\mathbb{R}^n})$ 
to~$\mu$.
\end{proof}
\subsection{Bi-Lipschitz non-equivalences}
In this subsection we will prove that the sliced and classical Monge--Kantorovich spaces (for the nontrivial case $n\geq 2$) are \emph{not} bi-Lipschitz equivalent for all $1\leq p<\infty$ when $q\neq \infty$, and for certain ranges of $p$ when $q=\infty$. In particular, this shows all sliced and max-sliced Wasserstein metrics are not bi-Lipschitz equivalent to the classical Monge--Kantorovich metrics.

We start by showing lower semi-continuity of $\mk_{p, q}$ under weak convergence. Although we do not explicitly use the following proposition, it is presented here as a result of independent interest.

\begin{proposition}\label{counterpart}
For sequences $(\mu_j)_{j\in \mathbb{N}}$, $(\nu_j)_{j\in \mathbb{N}}$ in $\mathcal{P}_p(\mathbb{R}^n)$
weakly converging to $\mu_\infty$, $\nu_\infty$ respectively in $\mathcal{P}(\mathbb{R}^n)$,
\[
\mk_{p,q}(\mu_\infty, \nu_\infty)
\leq 
\liminf_{j\to \infty}\mk_{p,q}(\mu_j, \nu_j).
\]
Moreover, if 
\[
\sup_{j\in \N} \mk_{p,q}(\delta_0^{\mathbb{R}^n}, \mu_j)<\infty,
\]
then there exists a weakly convergent subsequence with limit in $\mathcal{P}_{\min\{p,q\}}(\mathbb{R}^n)$.
\end{proposition}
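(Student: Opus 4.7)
The plan is to reduce both assertions to pointwise-in-$\omega$ statements about $\mk_p^\R$. For the lower semi-continuity, the key input is that for each fixed $\omega \in \S^{n-1}$ the linear map $R^\omega : \R^n \to \R$ is continuous, so weak convergence $\mu_j \rightharpoonup \mu_\infty$ in $\mathcal{P}(\R^n)$ automatically yields weak convergence $R^\omega_\sharp \mu_j \rightharpoonup R^\omega_\sharp \mu_\infty$ in $\mathcal{P}(\R)$, and similarly for $\nu_j$. Using the weak lower semi-continuity of $\mk_p^\R$ recalled in Remark~\ref{rmk: mk compact}, one obtains
\[
\mk_p^\R(R^\omega_\sharp \mu_\infty, R^\omega_\sharp \nu_\infty) \leq \liminf_{j\to\infty} \mk_p^\R(R^\omega_\sharp \mu_j, R^\omega_\sharp \nu_j)
\]
for every $\omega$. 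For $q < \infty$ I would raise both sides to the $q$-th power and apply Fatou's lemma on $(\S^{n-1}, \sigma_{n-1})$; for $q = \infty$, taking an essential supremum in $\omega$ on both sides does the job directly.

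For the moreover part, I would split on the relative size of $p$ and $q$ in order to pass from the $\mk_{p,q}$-bound to a uniform moment bound on the $\mu_j$. If $q \leq p$, then by the monotonicity noted just before Lemma~\ref{comparison} one has $\mk_{q,q}(\delta_0^{\R^n}, \mu_j) \leq \mk_{p,q}(\delta_0^{\R^n}, \mu_j) \leq C$, and the equality part of Lemma~\ref{comparison} translates this directly into a uniform bound on $\int_{\R^n} |x|^q d\mu_j(x)$. If $p \leq q < \infty$, I would expand
\[
C^q \geq \mk_{p,q}(\delta_0^{\R^n}, \mu_j)^q = \int_{\S^{n-1}} \left(\int_{\R^n} |\langle x, \omega \rangle|^p d\mu_j(x)\right)^{q/p} d\sigma_{n-1}(\omega),
\]
apply Jensen's inequality for the convex function $t \mapsto t^{q/p}$ against the probability measure $\sigma_{n-1}$ to drop the outer exponent, and then combine Fubini with the identity $\int_{\S^{n-1}} |\langle x, \omega\rangle|^p d\sigma_{n-1}(\omega) = M_{p,n}^p |x|^p$ to extract a uniform bound on $\int_{\R^n} |x|^p d\mu_j(x)$. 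The case $q = \infty$ is analogous: the essential supremum bound already gives $\int |\langle x,\omega\rangle|^p d\mu_j(x) \leq C^p$ for $\sigma_{n-1}$-a.e.\ $\omega$, after which the same Fubini step applies.

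In all cases the sequence $(\mu_j)_{j\in\N}$ has uniformly bounded $\min\{p,q\}$-th moment, which by Chebyshev's inequality gives tightness, so a subsequence converges weakly to some $\mu_\infty \in \mathcal{P}(\R^n)$; the weak lower semi-continuity of the $\min\{p,q\}$-th moment (again via Remark~\ref{rmk: mk compact}) then places $\mu_\infty$ in $\mathcal{P}_{\min\{p,q\}}(\R^n)$. The main obstacle is the case split in the moreover part: the two regimes genuinely require different ingredients (Lemma~\ref{comparison} for $q \leq p$, Jensen plus Fubini for $p \leq q$), and I do not see a single uniform estimate that covers both at once.
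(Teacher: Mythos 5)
Your proof is correct and runs along essentially the same lines as the paper's. For the lower semi-continuity the arguments coincide: weak convergence pushes forward through the continuous map $R^\omega$, then one applies lower semi-continuity of $\mk_p^\R$ pointwise in $\omega$ followed by Fatou's lemma (for $q<\infty$) or a direct essential supremum estimate (for $q=\infty$). For the ``moreover'' part, both you and the paper split on the sign of $p-q$ and both arrive at a uniform bound on the $\min\{p,q\}$th moment, but you assign each technique to the opposite case: you use the monotonicity of $\mk_{p,q}$ together with the equality in Lemma~\ref{comparison} when $q\leq p$ and Jensen's inequality plus Fubini when $p\leq q$, while the paper does the reverse. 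Both variants work, and indeed the Jensen--Tonelli step in one direction is effectively a re-derivation of the monotonicity $\mk_{q,q}\leq\mk_{p,q}$. Regarding your closing remark, there is in fact a single estimate covering both regimes: writing $s:=\min\{p,q\}$, the stated monotonicity gives $\mk_{s,s}\leq\mk_{p,q}$ (since $s\leq p$ and $s\leq q$), while the equality in Lemma~\ref{comparison} applied at exponent $s$ gives $\mk_{s,s}(\delta_0^{\R^n},\mu_j)=M_{s,n}\,\mk_s^{\R^n}(\delta_0^{\R^n},\mu_j)$, so that
\[
M_{s,n}\,\mk_s^{\R^n}(\delta_0^{\R^n},\mu_j)\leq\mk_{p,q}(\delta_0^{\R^n},\mu_j)
\]
holds for all $p,q$ at once, controlling the $s$th moment uniformly without any case split or Jensen argument.
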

\begin{proof}
Let $(\mu_j)_{j\in \mathbb{N}}$, $(\nu_j)_{j\in \mathbb{N}}$ be sequences in $\mathcal{P}_p(\mathbb{R}^n)$
weakly converging to $\mu_\infty,\nu_\infty$ respectively in $\mathcal{P}(\mathbb{R}^n)$.
Then for any $\omega\in \S^{n-1}$, 
we see that $(\rad^\omega_\sharp\mu_j)_{j\in \mathbb{N}}$ and $(\rad^\omega_\sharp\nu_j)_{j\in \N}$ converge weakly to 
$\rad^\omega_\sharp\mu_\infty$ and $\rad^\omega_\sharp\nu_\infty$ respectively.
By weak lower-semicontinuity of $\mk_p^\R$, Remark~\ref{rmk: mk compact},  
we have 
\begin{align}\label{eq:counterpart}
\mk_p^\R(\rad^\omega_\sharp\mu_\infty, \rad^\omega_\sharp\nu_\infty)\leq \liminf_{j\to\infty} \mk_p^\R(\rad^\omega_\sharp\mu_j, \rad^\omega_\sharp\nu_j),
\end{align}
then it follows (from Fatou's lemma when $q\neq\infty$ and simple calculation when $q=\infty$) that 
\begin{align*}
\liminf_{j\to \infty}\mk_{p,q}(\mu_j, \nu_j)
&\geq \left\| \liminf_{j\to \infty}\mk_p^\R(\rad^\bullet_\sharp\mu_j, \rad^\bullet_\sharp\nu_j)\right\|_{L^q(\sigma_{n-1})}\\
&\geq \left\| \mk_p^\R(\rad^\bullet_\sharp\mu_\infty, \rad^\bullet_\sharp\nu_\infty)\right\|_{L^q(\sigma_{n-1})}
=\mk_{p,q}(\mu_\infty, \nu_\infty).
\end{align*}
This proves the first assertion.

Next, let $(\mu_j)_{j\in \mathbb{N}}$ be a sequence in $\mathcal{P}_p(\mathbb{R}^n)$ 
such that $\mk_{p,q}(\delta_0^{\mathbb{R}^n}, \mu_j)$ is uniformly bounded from above.
If  $p\leq q$, then by Lemma~\ref{comparison} with the monotonicity of $\mk_{p,q}$, 
\[
M_{p,n} \cdot \sup_{j\in \N} \mk_p^{\mathbb{R}^n}(\delta_0^{\mathbb{R}^n}, \mu_j)
=
 \sup_{j\in \N} \mk_{p,p}(\delta_0^{\mathbb{R}^n}, \mu_j)
\leq
\sup_{j\in \N} \mk_{p,q}(\delta_0^{\mathbb{R}^n}, \mu_j)
<\infty.
\]
Then from Remark~\ref{rmk: mk compact} there exists a weakly convergent subsequence with limit $\mu_\infty\in\mathcal{P}_p(\mathbb{R}^n)$.

On the other hand, if $q<p$, then 
\begin{align*}
\mk_{p,q}(\delta_0^{\mathbb{R}^n}, \mu_j)^q
&=\int_{\mathbb{S}^{n-1}}\left( \int_{\mathbb{R}^n} |\langle x, \omega\rangle|^p d\mu_j(x) \right)^{\frac{q}{p}}d\sigma_{n-1}(\omega)\\
&\geq
\int_{\mathbb{S}^{n-1}} \int_{\mathbb{R}^n} |\langle x, \omega\rangle|^q d\mu_j(x) d\sigma_{n-1}(\omega)\\
&=\mk_{q,q}(\delta_0^{\mathbb{R}^n}, \mu_j)^q
=M_{n,q}^q \cdot \mk_{q}^{\mathbb{R}^n}(\delta_0^{\mathbb{R}^n}, \mu_j)^q
\end{align*}
by Jensen's inequality and Tonelli's theorem, and then Lemma~\ref{comparison}. 
Again by Remark~\ref{rmk: mk compact},   
there exists a weakly convergent subsequence with limit $\mu_\infty\in\mathcal{P}_q(\mathbb{R}^n)$, completing the proof.
\end{proof}
By following an idea similar to \cite{BayraktarGuo21}*{Theorem 2.3 (iii)}, we can show the claimed metric bi-Lipschitz non-equivalences when $q\neq\infty$. For the case $q=\infty$, we rely on known generic estimates on $\mk_{1, \infty}$.

We first show estimates on the $p$-Monge--Kantorovich metrics between certain measures and empirical samples. Below, for $i\in \N$, we will write $\mathcal{H}^i$ for $i$-dimensional Hausdorff measure.
\begin{lemma}\label{lem: transform of surface measure}
For any $\omega=\cos\theta e_1+\sin\theta e_2\in \S^1$ with $\theta\in [0, {\pi}/{4}]$, 
the Borel probability measure $\rad^\omega_\sharp (4^{-1}\mathcal{H}^2|_{[-1,1]^2})$ is absolutely continuous with respect to the one-dimensional Lebesgue measure,
where the density $f_\theta$
is an even function on $\mathbb{R}$ and
\begin{align*}
    f_\theta(t):=
    \begin{dcases}
        \frac{1}{2\cos \theta},&0\leq t\leq \cos\theta-\sin\theta,\\
        \frac{\cos\theta+\sin\theta-t}{4\sin\theta\cos\theta}, &\cos\theta-\sin\theta< t\leq \cos\theta+\sin\theta,\\
        0,&t>\cos\theta+\sin\theta.
    \end{dcases}
\end{align*}
\end{lemma}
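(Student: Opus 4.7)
The plan is to compute the density of $R^\omega_\sharp(4^{-1}\mathcal{H}^2|_{[-1,1]^2})$ by disintegrating Lebesgue measure on $[-1,1]^2$ along the level sets of $R^\omega$, then carry out an elementary geometric case analysis of where the line $L_t:=\{x\in\R^2\mid \langle x,\omega\rangle=t\}$ meets the square. The assumption $\theta\in[0,\pi/4]$ will be used to guarantee $\cos\theta\geq \sin\theta\geq 0$, which pins down which pairs of edges of $[-1,1]^2$ are cut by $L_t$.

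First, I would observe the symmetry $x\mapsto -x$ preserves the square and satisfies $R^\omega(-x)=-R^\omega(x)$, so $R^\omega_\sharp(4^{-1}\mathcal{H}^2|_{[-1,1]^2})$ is invariant under $t\mapsto -t$; it therefore suffices to describe $f_\theta$ on $[0,\infty)$. Next, I would perform the orthogonal change of variables $(x_1,x_2)=(u\cos\theta-v\sin\theta,u\sin\theta+v\cos\theta)$, which has unit Jacobian and satisfies $\langle x,\omega\rangle=u$. For any $\phi\in C_b(\R)$, Fubini then gives
\begin{align*}
\int_{\R}\phi\,dR^\omega_\sharp(4^{-1}\mathcal{H}^2|_{[-1,1]^2})
&=\tfrac14\int_{\R}\phi(u)\,\mathcal{H}^1(L_u\cap [-1,1]^2)\,du,
\end{align*}
so the density at $u=t$ is precisely $\mathcal{H}^1(L_t\cap[-1,1]^2)/4$. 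This reduces the lemma to computing chord lengths.

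For $t\in[0,\cos\theta-\sin\theta]$, I would check directly that the intersections of $L_t$ with the bottom and top edges $\{x_2=\mp 1\}$ occur at $x_1=(t\pm\sin\theta)/\cos\theta$, both lying in $[-1,1]$; whereas the endpoints predicted by the left and right edges fall outside the square. The chord therefore has horizontal extent $2\sin\theta/\cos\theta$ and vertical extent $2$, giving length $2/\cos\theta$ and hence density $1/(2\cos\theta)$. For $t\in(\cos\theta-\sin\theta,\cos\theta+\sin\theta]$, the same scan shows $L_t$ now exits through the right edge $\{x_1=1\}$ at height $x_2=(t-\cos\theta)/\sin\theta\in[-1,1]$ and through the top edge $\{x_2=1\}$ at $x_1=(t-\sin\theta)/\cos\theta\in[-1,1]$. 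The Pythagorean theorem gives the chord length
\begin{align*}
\bigl(\cos\theta+\sin\theta-t\bigr)\sqrt{\tfrac{1}{\sin^2\theta}+\tfrac{1}{\cos^2\theta}}=\frac{\cos\theta+\sin\theta-t}{\sin\theta\cos\theta},
\end{align*}
which, divided by $4$, matches the claimed formula. Finally, for $t>\cos\theta+\sin\theta$ the support condition $\langle x,\omega\rangle\leq|x|\cdot|\omega|\leq\cos\theta+\sin\theta$ on $[-1,1]^2$ forces $L_t\cap[-1,1]^2=\varnothing$, so the density vanishes.

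The computation has no hidden obstacle; the only subtlety is bookkeeping which two edges of $[-1,1]^2$ host the endpoints of the chord in each regime, and this is where the hypothesis $\theta\in[0,\pi/4]$ is essential (it ensures $\cos\theta-\sin\theta\geq 0$ so the first regime is non-degenerate and the transition between the two regimes is monotone in $t$). The degenerate endpoints $\theta=0$ (rectangular slice of constant length $2$) and $\theta=\pi/4$ (where the first interval collapses to $\{0\}$) can be verified by direct substitution and serve as useful sanity checks.
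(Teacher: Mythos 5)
Your proof follows essentially the same approach as the paper's: identify the density as one quarter of the chord length $\mathcal{H}^1(L_t\cap[-1,1]^2)$, then do a case analysis on which pair of edges hosts the endpoints of the chord, using $\theta\in[0,\pi/4]$ to pin this down. The only genuine difference is that you justify the chord-length formula via an explicit orthogonal change of variables, which the paper leaves implicit; this is a small improvement in rigor.

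One small slip worth fixing: in the last case you write $\langle x,\omega\rangle\leq|x|\cdot|\omega|\leq\cos\theta+\sin\theta$, but $|x|\cdot|\omega|=|x|$ can be as large as $\sqrt{2}$ on $[-1,1]^2$, and $\sqrt{2}>\cos\theta+\sin\theta$ whenever $\theta<\pi/4$, so the second inequality is false. The correct elementary bound is coordinate-wise: since $\cos\theta,\sin\theta\geq 0$ and $x_1,x_2\leq 1$, one has $\langle x,\omega\rangle=x_1\cos\theta+x_2\sin\theta\leq\cos\theta+\sin\theta$. The conclusion is unchanged.
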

\begin{proof}
If $\theta=0$, it is clear that $f_\theta=2^{-1}\mathds{1}_{[-1, 1]}$, thus we may assume $\theta>0$. By symmetry we can see that $f_\theta$ will be even, and since $\theta\in (0, \pi/4]$ it is clear that
\[
\rad^\omega_\sharp (4^{-1}\mathcal{H}^2|_{[-1,1]^2})
([\cos\theta+\sin\theta, \infty))=0.
\]

Now fix $t\in (0, \cos\theta+\sin\theta)$. The density $f_\theta$ can be calculated as one quarter of the length of the line segment 
\begin{align*}
    \{(x, y)\in [-1, 1]^2\mid x\cos\theta+y\sin\theta=t\}.
\end{align*}
This line segment has one endpoint
\[
\left(\frac{t-\sin\theta}{\cos \theta}, 1\right)
\]
and the other endpoint $(x_\ast, y_\ast)$ with either $y_\ast=-1$ or $x_\ast=1$. 
This yields
\[
f_\theta(t)=\frac14\sqrt{ \left(x_\ast-\frac{t-\sin\theta}{\cos \theta}\right)^2 + (y_\ast-1)^2 }.
\]
If $y_\ast=-1$, then $x_\ast\cos \theta-\sin\theta=t$, or equivalently $x_\ast=(t+\sin\theta)/\cos \theta$.
However since $x_\ast\leq 1$, this can only happen when $t\leq \cos\theta-\sin\theta$. 
Thus when $\cos\theta-\sin\theta< t\leq \cos\theta+\sin\theta$, we have $x_\ast=1$, and we calculate $y_\ast=(t-\cos\theta)/\sin\theta$,
consequently
\begin{align*}
     f_\theta(t)
     &=\begin{dcases}
         \frac{1}{2\cos\theta},&0<t\leq \cos\theta-\sin\theta,\\
         \frac{\sin\theta+\cos\theta-t}{4\cos \theta\sin\theta},&\cos\theta-\sin\theta<t\leq \cos\theta+\sin\theta,
     \end{dcases}
 \end{align*}
 as claimed.
\end{proof}
\begin{lemma}\label{lem: transform empirical sampling}
Fix $2\leq p<\infty$ and $n\geq 2$. 
Let us write $\pi_{\R^2}: \R^n\to \R^2\times\{0\}\subset \R^n$ for projection onto the first $2$ coordinates. 
Also let $\mu:=\left.4^{-1}\mathcal{H}^2\right\vert_{[-1, 1]^2\times \{0\}}$ viewed as an element of $\mathcal{P}_p(\R^n)$, 
and let $\{X_j\}_{j\in \N}$ be an i.i.d. collection of~$\R^n$-valued random variables 
distributed according to $\mu$. Then for any $N\in \N$ and $\omega\in \S^{n-1}$ with $\pi_{\mathbb{R}^2}(\omega)\neq 0$,
\begin{align*}
    \mathbb{E}\left(\mk_p^\R\left(\rad^\omega_\sharp\mu, \frac1N\sum_{j=1}^N\delta_{\langle X_j,\omega\rangle}^\R\right)^p\right)
    &\leq (5p)^{p}\cdot 2^{p+1} \cdot N^{-\frac{p}{2}}.
    \end{align*}
\end{lemma}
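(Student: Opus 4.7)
The plan is to reduce to a canonical one-dimensional problem and then apply a Bobkov--Ledoux type empirical $p$-Wasserstein bound. Since $\mu$ is supported in $[-1,1]^2\times\{0\}$, the inner product $\langle X_j,\omega\rangle$ depends only on $\pi_{\mathbb{R}^2}(\omega)$. Writing $c:=|\pi_{\mathbb{R}^2}(\omega)|\in(0,1]$ so that $\pi_{\mathbb{R}^2}(\omega)=c(\cos\phi\,e_1+\sin\phi\,e_2)$ for some $\phi$, the invariance of $\mu$ under coordinate reflections and the swap $(x,y)\mapsto(y,x)$ lets us replace $\phi$ by some $\theta\in[0,\pi/4]$ without changing the distribution of $\langle X_1,\omega\rangle$. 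Moreover $\mk_p^{\mathbb{R}}((\varphi_c)_\sharp\sigma,(\varphi_c)_\sharp\tau)=c\,\mk_p^{\mathbb{R}}(\sigma,\tau)$ under the dilation $\varphi_c(t)=ct$, and $c^p\leq 1$, so it suffices to prove the bound in the canonical case $\omega=\cos\theta\,e_1+\sin\theta\,e_2$ with $\theta\in[0,\pi/4]$.

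Under this reduction, Lemma~\ref{lem: transform of surface measure} gives the density $f_\theta$ of $\nu:=\rad^\omega_\sharp\mu$ explicitly, and a direct calculation yields the quantile (with $B:=\sin\theta/(2\cos\theta)\leq 1/2$)
\[
F^{-1}_\nu(s)=\begin{cases}-(\cos\theta+\sin\theta)+2\sqrt{2\sin\theta\cos\theta\,s},&s\in[0,B],\\ 2\cos\theta\,(s-\tfrac12),&s\in[B,1-B],\\ (\cos\theta+\sin\theta)-2\sqrt{2\sin\theta\cos\theta(1-s)},&s\in[1-B,1].\end{cases}
\]
In particular $F^{-1}_\nu$ is $2\cos\theta$-Lipschitz on the middle interval and $\tfrac12$-H\"older on each tail with derivative blowing up at the endpoints $s=0,1$.

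Next I would use the standard $1$D identity $\mk_p^{\mathbb{R}}(\nu,\nu_N)^p=\int_0^1|F^{-1}_\nu(s)-F^{-1}_\nu(U_{(\lceil Ns\rceil)})|^p\,ds$, where $U_{(k)}$ is the $k$th order statistic of $N$ i.i.d.\ uniforms on $[0,1]$, together with a Bobkov--Ledoux style bound of the form
\[
\mathbb{E}\bigl[\mk_p^{\mathbb{R}}(\nu,\nu_N)^p\bigr]\leq\frac{(5p)^p}{N^{p/2}}\int_0^1\frac{(s(1-s))^{p/2}}{f_\theta(F^{-1}_\nu(s))^p}\,ds.
\]
Using the explicit expressions above, the integrand on the right equals $(2\cos\theta)^p(s(1-s))^{p/2}\leq 1$ on the middle and $(2\sin\theta\cos\theta)^{p/2}(\min(s,1-s))^{p/2}\leq 1$ on each tail; since the middle has length at most $1$ and each tail has length $B\leq 1/2$, the integral is at most $2$. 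This gives $\mathbb{E}[\mk_p^{\mathbb{R}}(\nu,\nu_N)^p]\leq 2\cdot(5p)^p\cdot N^{-p/2}\leq(5p)^p\cdot 2^{p+1}\cdot N^{-p/2}$, as claimed.

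The hardest step is establishing the Bobkov--Ledoux bound with the explicit constant $(5p)^p$. The natural approach is to write $|F^{-1}_\nu(s)-F^{-1}_\nu(U_{(k)})|=\bigl|\int_s^{U_{(k)}}du/f_\theta(F^{-1}_\nu(u))\bigr|$ from the fundamental theorem of calculus, and combine with sharp $p$th central moment bounds for the Beta-distributed order statistics $U_{(k)}$. The subtlety is that $1/(f_\theta\circ F^{-1}_\nu)$ is unbounded near $s=0,1$, but the standard deviation of $U_{(\lceil Ns\rceil)}$ is of order $\sqrt{s(1-s)/N}$, which precisely absorbs the singularity and produces the weight $(s(1-s))^{p/2}$ that appears in the integrand.
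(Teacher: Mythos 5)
Your approach is essentially the same as the paper's: reduce by reflection symmetry and dilation to the canonical direction $\omega(\theta)=\cos\theta\,e_1+\sin\theta\,e_2$ with $\theta\in[0,\pi/4]$, invoke the explicit density from Lemma~\ref{lem: transform of surface measure}, apply a Bobkov--Ledoux-type expected empirical Wasserstein bound, and estimate the resulting one-dimensional integral. Your $s$-variable formulation in terms of the quantile $F_\nu^{-1}$ is exactly the change-of-variables rewriting of the paper's $\int_{\mathbb{R}}[F(t)(1-F(t))]^{p/2}/f(t)^{p-1}\,dt$, and your bound of the integrand by $1$ on each piece is, if anything, slightly cleaner than the paper's $\norm^p(2^p+2^{p/2}\max\{1,2^{1/2-p/4}\})$.

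The one genuine gap is that you treat the Bobkov--Ledoux estimate itself (``the hardest step'') as something to be re-derived, and only sketch a route through order statistics and Beta moment bounds without carrying it out. The paper simply cites Bobkov and Ledoux (2019), Theorem~5.3, which gives the needed inequality verbatim (indeed with the sharper prefactor $(5p/\sqrt{N+2})^p$); there is no need to reprove it, and without that citation your argument is incomplete. Two minor slips, both harmless: on the left tail the correct extra factor is $(1-s)^{p/2}$ (and $s^{p/2}$ on the right), not $\min(s,1-s)^{p/2}$, though either way it is $\leq 1$; and the domain of integration has total length $1$, so your integral bound can be taken as $\leq 1$ rather than $\leq 2$. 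Neither affects the final conclusion $(5p)^p\cdot 2^{p+1}\cdot N^{-p/2}$.
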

\begin{proof}
Fix $\omega\in \mathbb{S}^{n-1}$ with $\pi_{\mathbb{R}^2}(\omega)\neq 0$.
By symmetry, it is sufficient to consider the case 
\[
\theta:=\arctan\left( \frac{\langle e_2,\omega\rangle}{\langle e_1,\omega\rangle}\right)\in\left(0, \frac{\pi}{4}\right].
\]
We also write $\norm:=|\pi_{\mathbb{R}^2}(\omega)|$ and $\omega(\theta):=\cos \theta e_1 +\sin \theta e_2\in \mathbb{S}^1$, and define $L^\norm:\mathbb{R}\to \mathbb{R}$ by $L^\norm(t):=\norm t$ for $t\in \mathbb{R}$.
For any Borel measurable $\psi: \R\to\R$ by Lemma~\ref{lem: transform of surface measure} we have
     \begin{align*}
         \int_\R \psi d\rad^\omega_\sharp\mu
         &=\int_{\R^n}\psi(\langle x, \pi_{\R^2}(\omega)\rangle)d\mu(x)
         =\int_{\R^n}\psi\left(\norm \left\langle x, \norm^{-1}\pi_{\R^2}(\omega)\right\rangle\right)d\mu(x)\\
         &=\int_\R\psi (\norm t)f_{\theta}(t)dt
       =\int_\R\psi  dL^{\norm}_\sharp ( \rad^{\omega(\theta)}_\sharp (4^{-1}\mathcal{H}^2|_{[-1,1]^2})), 
     \end{align*}
consequently,
$\rad^\omega_\sharp\mu=L^{\norm}_\sharp ( \rad^{\omega(\theta)}_\sharp (4^{-1}\mathcal{H}^2|_{[-1,1]^2}))$.
Let $f_\omega$ denote the density of $R^\omega_\sharp\mu$ with respect to the one-dimensional Lebesgue measure, that is,
\[
f_\omega(t)=\norm^{-1} f_{\theta} (\norm^{-1}t),
\]
and set 
\[
F_\omega(t):=R^\omega_\sharp\mu((-\infty,t])=\rad^{\omega(\theta)}_\sharp (4^{-1}\mathcal{H}^2|_{[-1,1]^2})((-\infty,\norm^{-1} t])
\] 
for $t\in \mathbb{R}$.
Since every random variable $\langle X_j,\omega\rangle$ is distributed according to $\rad^\omega_\sharp\mu$, 
we observe from \cite{BobkovLedoux19}*{Theorem 5.3} that
\[
\mathbb{E}\left(\mk_p^\R\left(\rad^\omega_\sharp\mu, \frac1N\sum_{j=1}^N\delta_{\langle X_j,\omega\rangle}^\R\right)^p\right)
\leq
\left(\frac{5p}{\sqrt{N+2}}\right)^p
\cdot \int_{\mathbb{R}} \frac{[F_\omega(t) (1-F_\omega(t))]^{\frac{p}{2}}}{f_\omega(t)^{p-1}} dt.
\]
Since $f_{\theta}$ is an even function on $\mathbb{R}$,
we have $F_\omega(t)=1-F_\omega(-t)$ for $t\in \mathbb{R}$, hence
\begin{align*}
 \int_{\mathbb{R}} \frac{[F_\omega(s) (1-F_\omega(s))]^{\frac{p}{2}}}{f_\omega(s)^{p-1}} ds
&= 
2 \int_{0}^{\infty}\frac{[F_\omega(s) (1-F_\omega(s))]^{\frac{p}{2}}}{f_\omega(s)^{p-1}} ds
= 
2 \int_{0}^{\norm(\cos\theta+\sin\theta)} \frac{[F_\omega(s) (1-F_\omega(s))]^{\frac{p}{2}}}{f_\omega(s)^{p-1}} dt.
\end{align*}
For $ \cos\theta-\sin\theta<\norm^{-1}t\leq \cos\theta+\sin\theta$, by Lemma~\ref{lem: transform of surface measure} we have
\begin{align*}
1-F_\omega(t)=1-\rad^{\omega(\theta)}_\sharp (4^{-1}\mathcal{H}^2|_{[-1,1]^2})((-\infty,\norm^{-1} t])
=\int_{\norm^{-1} t}^\infty f_{\theta}(s)ds
=\frac12\cdot\frac{(\sin\theta+\cos \theta-\norm^{-1}t)^2}{4\cos\theta\sin\theta}.
\end{align*}
This implies that, again using Lemma~\ref{lem: transform of surface measure},
\begin{align*}
 \int_{\mathbb{R}} \frac{[F_\omega(s) (1-F_\omega(s))]^{\frac{p}{2}}}{f_\omega(s)^{p-1}} ds
&\leq
2 \int_{0}^{\norm(\cos\theta-\sin\theta)}  (2\norm\cos \theta)^{p-1}ds\\
&\quad+
2 \int_{\norm(\cos\theta-\sin\theta)}^{\norm(\cos\theta+\sin\theta)} 
2^{-\frac{p}{2}}\norm^{p-1}\left(4\cos\theta\sin\theta \right)^{\frac{p}{2}-1}\left(\sin \theta+\cos \theta -\norm^{-1}t\right) ds\\
&=
2 \norm(\cos\theta-\sin\theta)  (2\norm\cos \theta)^{p-1}
+
 2^{2-\frac{p}{2}}\norm^{p}\left(4\cos\theta \right)^{\frac{p}{2}-1}\left(\sin \theta\right)^{\frac{p}{2}+1}\\
&\leq
\norm^p(2^p +2^{\frac{p}{2}}\max\{1, 2^{\frac{1}{2}-\frac{p}{4}}\})
\leq (5p)^{p}\cdot 2^{p+1} \cdot N^{-\frac{p}{2}},
\end{align*}
finishing the proof.
\end{proof}
\begin{proof}[Proof of \nameref{thm: main sliced}~\eqref{thm: sliced not metric equiv}]
Let $n\neq 1$ and first assume $q\neq\infty$. 
Let $\mu:=\left.4^{-1}\mathcal{H}^2\right\vert_{[-1, 1]^2\times \{0\}}$
and again suppose $\{X_j\}_{j=1}^N$ are i.i.d. random samples distributed according to $\mu$; also write 
\[
\mu_N:=\frac1N\sum_{j=1}^N\delta_{X_j}^{\R^n}.
\] 
By using Lemma~\ref{lem: transform empirical sampling}, we have when $p\geq q$ by Minkowski's integral inequality,
\begin{align}
\begin{split}
    \mathbb{E}(\mk_{p, q}(\mu, \mu_N)^p)
    &=\mathbb{E}\left(\left[\int_{\S^{n-1}}\mk_p^\R(\rad^\omega_\sharp\mu, \rad^\omega_\sharp\mu_N)^qd\sigma_{n-1}(\omega)\right]^{\frac{p}{q}}\right)\\
    &\leq \left[\int_{\S^{n-1}}\left(\mathbb{E}\left(\mk_p^\R(\rad^\omega_\sharp\mu, \rad^\omega_\sharp\mu_N)^p\right)\right)^{\frac{q}{p}}d\sigma_{n-1}(\omega)\right]^{\frac{p}{q}}\\
    &\leq (5p)^{p}\cdot 2^{p+1}\cdot N^{-\frac{p}{2}}
    ,\label{eqn: E comparison q<p}
\end{split}    
\end{align}
while if $p<q$, using Jensen's inequality followed by monotonicity of $\mk_p^\R$ in $p$, then Tonelli's theorem, we have
\begin{align}
\begin{split}
    \mathbb{E}(\mk_{p, q}(\mu, \mu_N)^p)
=&    \mathbb{E}\left(\left[\int_{\S^{n-1}}\mk_p^\R(\rad^\omega_\sharp\mu, \rad^\omega_\sharp\mu_N)^qd\sigma_{n-1}(\omega)\right]^{\frac{p}{q}}\right)\\
&\leq \left(\mathbb{E}\left[\int_{\S^{n-1}}\mk_p^\R(\rad^\omega_\sharp\mu, \rad^\omega_\sharp\mu_N)^qd\sigma_{n-1}(\omega)\right]\right)^{\frac{p}{q}}\\
    &\leq \left(\mathbb{E}\left[\int_{\S^{n-1}}\mk_q^\R(\rad^\omega_\sharp\mu, \rad^\omega_\sharp\mu_N)^qd\sigma_{n-1}(\omega)\right]\right)^{\frac{p}{q}}\\
    &=\left[\int_{\S^{n-1}}\mathbb{E}\left(\mk_q^\R(\rad^\omega_\sharp\mu, \rad^\omega_\sharp\mu_N)^q\right)d\sigma_{n-1}(\omega)\right]^{\frac{p}{q}}\\
    &\leq  
    [(5q)^{q}\cdot 2^{q+1}]^{\frac{p}{q}}\cdot N^{-\frac{p}{2}}
    .\label{eqn: E comparison p<q}
\end{split}
\end{align}
Now suppose by contradiction, there is a $C>0$ such that $\mk_p^{\R^n}(\mu, \nu)\leq C\mk_{p, q}(\mu, \nu)$ for all $\mu$, $\nu\in \mathcal{P}_p(\R^n)$, then by the classical Ajtai--Koml{\'o}s--Tusn{\'a}dy theorem (see for example~\cite{Ledoux17}*{(7)})
 we have for some constant $\widetilde{C}>0$ and for all $N$,
\begin{align*}
    \mathbb{E}(\mk_p^{\R^n}(\mu, \mu_N)^p)\geq \widetilde{C}\left(\frac{\log N}{N}\right)^{\frac{p}{2}}.
\end{align*}
Thus combining the above with~\eqref{eqn: E comparison q<p} or \eqref{eqn: E comparison p<q} implies for some constant $C_{p, q, n}>0$ depending only on $p$, $q$, and $n$,
\begin{align*}
    C_{p, q, n}\geq (\log N)^{\frac{p}{2}}
\end{align*}
which is a contradiction as $N\to\infty$, and the result is proved when $q\neq \infty$.

Now suppose $q=\infty$, let $\mu$ be the Lebesgue measure on $[0, 1]^n\subset \R^n$, and let $\mu_N$ be constructed from i.i.d. random samples distributed according to $\mu$ as above. By~\cite{Ledoux17}*{(7)}, we have
\begin{align*}
    \mathbb{E}(\mk^{\R^n}_p(\mu, \mu_N)^p)
    \geq \begin{cases}
    \widetilde{C} N^{-\frac{p}{2}}(\log N)^{\frac{p}{2}},&n=2,\\
        \widetilde{C} N^{-\frac{p}{n}},&n\geq 3.
    \end{cases}
\end{align*}
Also by~\cite{xu2022central}*{Theorem 3 (3)} and using that $\spt(\rad^\omega_\sharp\mu)$, $\spt(\rad^\omega_\sharp\mu_N)$ are all contained in $[-n^{1/2}, n^{1/2}]$, we have
\begin{align*}
    \mathbb{E}(\mk_{p, \infty}(\mu, \mu_N)^p)
    \leq (2n^{1/2})^{p-1}\mathbb{E}(\mk_{1, \infty}(\mu, \mu_N))\leq C'N^{-\frac{1}{2}},
\end{align*}
for some $C'>0$, thus combining with the above estimate and taking $N\to\infty$ would yield a contradiction if $\mk_p^{\R^n}(\mu, \mu_N)\leq C\mk_{p, \infty}(\mu, \mu_N)$ for some fixed constant $C>0$, and either $1\leq p<n/2$, or $n=2$ and $p=1$.
\end{proof}

\subsection{Non-existence of geodesics}
In this subsection we discuss the geodesic properties of $(\mathcal{P}_p(\R^n), \mk_{p, q})$. 
We now recall and/or prove a few basic properties of geodesics with respect to  $\mk_p^{\R^n}$. 

When $n=1$ and $p>1$, minimal geodesics are uniquely determined. 
\begin{lemma}[\cite{Santambrogio15}*{Theorem 2.9} and  \cite{Villani09}*{Corollary 7.23}]\label{1unique}
Let $p>1$.
For $\mu_0,\mu_1\in \mathcal{P}_p(\mathbb{R})$,
there exists a unique minimal geodesic in $(\mathcal{P}_p(\mathbb{R}),\mk_p^{\mathbb{R}})$
from $\mu_0$ to $\mu_1$.
\end{lemma}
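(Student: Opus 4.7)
The plan is to leverage the fact that on $\R$ with $p > 1$, strict convexity of $t \mapsto |t|^p$ forces the $p$-optimal coupling between any $\mu_0, \mu_1 \in \mathcal{P}_p(\R)$ to be unique: it is the \emph{monotone rearrangement} $\gamma^\ast := (F_{\mu_0}^{-1}, F_{\mu_1}^{-1})_\sharp \mathrm{Leb}|_{[0,1]}$, where $F^{-1}$ denotes the quantile function. The natural candidate for a minimal geodesic is then the displacement interpolation $\mu_\tau := ((1-\tau)\pi_1 + \tau\pi_2)_\sharp \gamma^\ast$, where $\pi_i: \R \times \R \to \R$ are the two coordinate projections, and the goal is to show any minimal geodesic must coincide with it.

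Existence of $\mu_\tau$ as a minimal geodesic is direct: the coupling
$\left((1-\tau)\pi_1 + \tau\pi_2,\; (1-\tau')\pi_1 + \tau'\pi_2\right)_\sharp\gamma^\ast$
belongs to $\Pi(\mu_\tau, \mu_{\tau'})$ with $p$-cost $|\tau - \tau'|^p \mk_p^\R(\mu_0, \mu_1)^p$. Since the maps $s\mapsto (1-\tau)s + \tau t$ and its $\tau'$ analogue preserve order on the monotone support of $\gamma^\ast$, this coupling itself has monotone support, hence is $p$-optimal. Thus $\mk_p^\R(\mu_\tau, \mu_{\tau'}) = |\tau - \tau'|\, \mk_p^\R(\mu_0, \mu_1)$.

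For uniqueness, given any minimal geodesic $\rho:[0,1]\to\mathcal{P}_p(\R)$ and $\tau\in(0,1)$, I would take $p$-optimal couplings $\gamma_0 \in \Pi(\mu_0, \rho(\tau))$ and $\gamma_1 \in \Pi(\rho(\tau), \mu_1)$, glue them over the common middle marginal $\rho(\tau)$ to get $\beta \in \mathcal{P}(\R^3)$ with first-two-marginals $\gamma_0$ and last-two-marginals $\gamma_1$, and set $\gamma := (\pi_1^{\R^3},\pi_3^{\R^3})_\sharp \beta \in \Pi(\mu_0, \mu_1)$. The chain of inequalities
\begin{align*}
\mk_p^\R(\mu_0, \mu_1) &\leq \left(\int |x_1 - x_3|^p\, d\beta\right)^{\!1/p} \leq \left(\int(|x_1-x_2|+|x_2-x_3|)^p\, d\beta\right)^{\!1/p} \\
&\leq \mk_p^\R(\mu_0, \rho(\tau)) + \mk_p^\R(\rho(\tau), \mu_1) = \mk_p^\R(\mu_0, \mu_1)
\end{align*}
(optimality of $\gamma$, pointwise triangle inequality, Minkowski's inequality, and the geodesic identity) must collapse to equalities. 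The first equality forces $\gamma = \gamma^\ast$ by uniqueness of $p$-optimal couplings; the second forces $x_2$ to lie between $x_1$ and $x_3$, $\beta$-a.e.; the third (Minkowski equality case for $p>1$) forces $|x_2-x_1|$ and $|x_3-x_2|$ to be proportional $\beta$-a.e.\ with a common constant $\lambda$, pinned to $\lambda = \tau/(1-\tau)$ by the separate identities $\mk_p^\R(\mu_0, \rho(\tau)) = \tau\mk_p^\R(\mu_0, \mu_1)$ and $\mk_p^\R(\rho(\tau), \mu_1) = (1-\tau)\mk_p^\R(\mu_0, \mu_1)$ (themselves forced by the defining geodesic inequality together with the tight triangle inequality). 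Hence $x_2 = (1-\tau)x_1 + \tau x_3$ $\beta$-a.e., giving $\rho(\tau) = (\pi_2^{\R^3})_\sharp \beta = \mu_\tau$.

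The main obstacle lies precisely in the last step: Minkowski's equality case only yields proportionality of $|x_2-x_1|$ and $|x_3-x_2|$, and fixing the exact proportion requires the two separate geodesic-distance identities. This is also where the assumption $p>1$ is indispensable—for $p=1$, Minkowski's inequality has a far weaker equality case and the argument (as well as uniqueness itself) fails, consistent with the known non-uniqueness of $\mk_1^\R$-geodesics.
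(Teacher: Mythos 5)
The paper does not prove this lemma itself but simply cites \cite{Santambrogio15}*{Theorem 2.9} and \cite{Villani09}*{Corollary 7.23}; your argument reconstructs exactly the standard textbook proof underlying those citations. It is correct: uniqueness of the comonotone $p$-optimal plan on $\R$ for $p>1$, the gluing of optimal plans over the midpoint $\rho(\tau)$, and the collapse of the chain of inequalities (optimality of $\gamma$, pointwise triangle inequality, Minkowski for $p>1$) together with the two forced identities $\mk_p^\R(\mu_0,\rho(\tau))=\tau\,\mk_p^\R(\mu_0,\mu_1)$ and $\mk_p^\R(\rho(\tau),\mu_1)=(1-\tau)\,\mk_p^\R(\mu_0,\mu_1)$ indeed pin down $x_2=(1-\tau)x_1+\tau x_3$ $\beta$-a.e. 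The only omitted (trivial) case is $\mu_0=\mu_1$, where the geodesic inequality directly forces $\rho\equiv\mu_0$ so no Minkowski proportionality constant is needed.
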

We now take a short detour to highlight the  behavior of $\mk_{p, q}$ when $p=1$, which is different with respect to geodesics. First, it is easy to see that linear combinations give $\mk_1^{\R^n}$-minimal geodesics, unlike when $p>1$.
\begin{lemma}\label{geod1}
For $\mu_0$, $\mu_1 \in \mathcal{P}_1(\R^n)$ 
the curve
$((1-\tau) \mu_0+\tau\mu_1)_{\tau\in [0,1]}$
is a $\mk_1^{\R^n}$-minimal geodesic.
\end{lemma}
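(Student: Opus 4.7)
The plan is to verify the geodesic inequality directly by exhibiting an explicit coupling between $\mu(\tau_1) := (1-\tau_1)\mu_0 + \tau_1\mu_1$ and $\mu(\tau_2) := (1-\tau_2)\mu_0 + \tau_2\mu_1$ for arbitrary $\tau_1, \tau_2 \in [0,1]$. First I would note that $\mu(\tau) \in \mathcal{P}_1(\R^n)$ for every $\tau \in [0,1]$, since its first moment is the convex combination $(1-\tau) \int |x| d\mu_0 + \tau \int |x| d\mu_1 < \infty$; thus the curve genuinely lies in the space of interest.

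Next, without loss of generality assume $\tau_1 \leq \tau_2$, and let $\gamma \in \Pi(\mu_0, \mu_1)$ be a $1$-optimal coupling provided by \cite{Villani09}*{Theorem~4.1}. I would then form the candidate coupling
\[
\gamma' := (1-\tau_2)(\Id, \Id)_\sharp \mu_0 + \tau_1 (\Id, \Id)_\sharp \mu_1 + (\tau_2 - \tau_1)\gamma.
\]
A direct check of the marginals (using that $\pi_{i\sharp}(\Id, \Id)_\sharp \eta = \eta$ for any $\eta$ and $i = 1, 2$, together with $\pi_{1\sharp}\gamma = \mu_0$, $\pi_{2\sharp}\gamma = \mu_1$) shows that $\gamma' \in \Pi(\mu(\tau_1), \mu(\tau_2))$. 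Since the first two terms of $\gamma'$ are supported on the diagonal $\{x = y\}$ in $\R^n \times \R^n$ and contribute zero transport cost, I obtain
\[
\mk_1^{\R^n}(\mu(\tau_1), \mu(\tau_2)) \leq \int_{\R^n \times \R^n} |x - y| d\gamma'(x, y) = (\tau_2 - \tau_1) \int_{\R^n \times \R^n} |x - y| d\gamma(x, y) = |\tau_2 - \tau_1|\,\mk_1^{\R^n}(\mu_0, \mu_1),
\]
which is precisely the required inequality \eqref{geodineq}.

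There is no substantial obstacle here: the argument is essentially a bookkeeping exercise with marginals, and the key structural fact being used is that $\mk_1^{\R^n}$ penalizes the cost $|x-y|$ rather than $|x-y|^p$ with $p > 1$, so that superposing mass along the diagonal is ``free'' and one can interpolate linearly without any rearrangement. The same construction fails for $p > 1$ because the overall transport cost is no longer linear in the coupling when raised to the $1/p$ power against a curve of linear combinations, which is ultimately why the geodesic structure degenerates outside $p = 1$ as reflected in \nameref{thm: main sliced}~\eqref{thm: sliced not geodesic}.
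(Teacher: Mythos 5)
Your proof is correct and takes essentially the same approach as the paper: both construct the interpolating coupling as $(1-\tau_2)(\Id,\Id)_\sharp\mu_0 + \tau_1(\Id,\Id)_\sharp\mu_1 + (\tau_2-\tau_1)\gamma$ with $\gamma$ a $1$-optimal coupling between $\mu_0$ and $\mu_1$, verify the marginals, and use linearity of the cost to conclude. The only addition is your explicit (and correct) check that $\mu(\tau) \in \mathcal{P}_1(\R^n)$, which the paper leaves implicit.
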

\begin{proof}
Let $\gamma$ be a $1$-optimal coupling between $\mu_0$ and $\mu_1$.
For $\tau$, $\tau_1$, $\tau_2\in [0, 1]$ with $\tau_1\leq \tau_2$, set  
\[
\mu(\tau):=(1-\tau) \mu_0+\tau\mu_1, \quad
\gamma_{\tau_1,\tau_2}:=(\mathrm{Id}_{\R^n}\times\mathrm{Id}_{\R^n} )_\sharp ((1-\tau_2)\mu_0+ \tau_1 \mu_1)+ (\tau_2-\tau_1)\gamma.
\]
Then $\mu:[0,1]\to \mathcal{P}_1({\R^n})$ 
is a curve joining $\mu_0$ to $\mu_1$, and $\gamma_{\tau_1,\tau_2}\in \mathcal{P}({\R^n} \times {\R^n})$.
For any Borel set $A\subset {\R^n}$, we can see 
\[
\gamma_{\tau_1,\tau_2}(A\times {\R^n})=(1-\tau_2)\mu_0(A)+\tau_1 \mu_1(A)+(\tau_2-\tau_1)\mu_0(A)=\mu(\tau_1)(A)
\]
and similarly
$\gamma_{\tau_1,\tau_2}({\R^n}\times A)=\mu(\tau_2)(A)$.
Thus $\gamma_{\tau_1,\tau_2}\in \Pi(\mu(\tau_1),\mu(\tau_2))$ and 
\begin{align*}
\mk_1^{\R^n}(\mu(\tau_1),\mu(\tau_2))
&\leq 
\int_{{\R^n} \times {\R^n}} \dist_{\R^n}(t,s) d\gamma_{\tau_1,\tau_2}(t,s)\\
&=(\tau_2-\tau_1)
\int_{{\R^n} \times {\R^n} } \dist_{\R^n}(t, s) d\gamma(t,s)
=(\tau_2-\tau_1)\mk_1^{\R^n}(\mu_0,\mu_1),
\end{align*}
proving the lemma.
\end{proof}

With this lemma in hand, we can easily prove that $(\mathcal{P}_1(\mathbb{R}^n), \mk_{1,q})$ is geodesic for all~$q$.
\begin{proposition}\label{geod}
For any $\mu_0$, $\mu_1\in \mathcal{P}_1(\R^n)$, the curve $((1-\tau)\mu_0+\tau\mu_1)_{\tau\in [0, 1]}$ is a $\mk_{1, q}$-minimal geodesic.
\end{proposition}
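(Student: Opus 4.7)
The plan is to reduce the statement to the one-dimensional case handled by Lemma~\ref{geod1}, by exploiting the linearity of the pushforward operation $\rad^\omega_\sharp$.

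First I would check that the curve stays in $\mathcal{P}_1(\R^n)$: since convex combinations of probability measures with finite first moment have finite first moment, $\mu(\tau) := (1-\tau)\mu_0 + \tau\mu_1 \in \mathcal{P}_1(\R^n)$ for every $\tau\in[0,1]$, and obviously $\mu(0) = \mu_0$, $\mu(1) = \mu_1$.

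The main step is the geodesic inequality. Because pushforward is linear in the measure argument, for every $\omega \in \S^{n-1}$ and every $\tau \in [0,1]$,
\begin{equation*}
\rad^\omega_\sharp\mu(\tau) = (1-\tau)\rad^\omega_\sharp\mu_0 + \tau\rad^\omega_\sharp\mu_1.
\end{equation*}
So the curve $\tau \mapsto \rad^\omega_\sharp\mu(\tau)$ is exactly the linear-interpolation curve between $\rad^\omega_\sharp\mu_0$ and $\rad^\omega_\sharp\mu_1$ in $\mathcal{P}_1(\R)$. Applying Lemma~\ref{geod1} on $\R$ (with $n=1$), this curve is a $\mk_1^\R$-minimal geodesic, hence for any $\tau_1, \tau_2 \in [0,1]$,
\begin{equation*}
\mk_1^\R(\rad^\omega_\sharp\mu(\tau_1), \rad^\omega_\sharp\mu(\tau_2)) \leq |\tau_1 - \tau_2|\, \mk_1^\R(\rad^\omega_\sharp\mu_0, \rad^\omega_\sharp\mu_1).
\end{equation*}

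Finally, I would take the $L^q(\sigma_{n-1})$ norm in $\omega$ of both sides. Since $|\tau_1-\tau_2|$ is a constant (independent of $\omega$), it factors out of the norm, yielding
\begin{equation*}
\mk_{1,q}(\mu(\tau_1), \mu(\tau_2)) \leq |\tau_1-\tau_2|\, \mk_{1,q}(\mu_0, \mu_1),
\end{equation*}
which is exactly the minimal geodesic inequality \eqref{geodineq}. There is no real obstacle here: the argument is essentially ``slicewise linearity $+$ Lemma~\ref{geod1} $+$ monotonicity of the $L^q$ norm''; the only thing to be careful about is invoking Lemma~\ref{geod1} in the form that guarantees the linear interpolation is the minimal geodesic on $\R$, which that lemma supplies directly.
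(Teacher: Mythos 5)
Your proof is correct and follows exactly the same approach as the paper: project slicewise via $\rad^\omega_\sharp$, observe the projections are linear interpolations to which Lemma~\ref{geod1} applies, and pull the constant $|\tau_1-\tau_2|$ through the $L^q(\sigma_{n-1})$ norm. The added remark about finite first moments of convex combinations is a harmless extra check that the paper leaves implicit.
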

\begin{proof}
For $\mu_0$, $\mu_1 \in \mathcal{P}_1(\mathbb{R}^n)$, and $\tau\in [0,1]$, 
set $\mu(\tau):=(1-\tau) \mu_0 +\tau \mu_1$.
Then, for $\omega \in \mathbb{S}^{n-1}$,
\[
 (\rad^\omega_\sharp \mu(\tau))_{\tau\in [0,1]}=((1-\tau)\rad^\omega_\sharp \mu_0+\tau\rad^\omega_\sharp \mu_1)_{\tau\in [0,1]}
\]
is a $\mk_1^{\mathbb{R}}$-minimal geodesic by Lemma~\ref{geod1}.
Then, for $\tau_1$, $\tau_2\in [0,1]$, 
we have
\begin{align*}
\mk_{1,q} (\mu(\tau_1),\mu(\tau_2))
=
\left\| \mk_1^\R ( \rad^\omega_\sharp \mu(\tau_1), \rad^\omega_\sharp \mu(\tau_2) ) \right\|_{L^q(\sigma_{n-1})}
=|\tau_1-\tau_2|
\mk_{1,q} (\mu_0,\mu_1).
\end{align*}
\end{proof}
\begin{remark}
    As mentioned in the introduction, although the above shows $(\mathcal{P}_1(\R^n), \mk_{1, q})$ is a geodesic space for any $1\leq q\leq \infty$, the minimal geodesics are merely \emph{linear} convex combinations, hence do not reflect any particular transport properties. 
\end{remark}
The situation when $1<p<\infty$ paints a stark contrast with the case $p=1$.
\begin{proof}[Proof of \nameref{thm: main sliced}~\eqref{thm: sliced not geodesic}]
Let $\mu_0$, $\mu_1\in \mathcal{P}_p(\mathbb{R}^n)$.
Assume that there exists
a minimal geodesic $\mu:[0,1]\to (\mathcal{P}_p(\mathbb{R}^n),\mk_{p,q}) $ 
from $\mu_0$ to $\mu_1$.
Then, for any partition $\{[\tau_i, \tau_{i+1})\}_{i=0}^{N-1}$ of $[0,1)$, we have 
\begin{align*}
\mk_{p,q} (\mu_0,\mu_1)
&=
\sum_{i=1}^N \mk_{p,q} (\mu(\tau_{i-1}),\mu(\tau_i))\\
&=
\sum_{i=1}^N \left\| \mk_p^{\mathbb{R}}( \rad^\bullet_\sharp \mu(\tau_{i-1}), \rad^\bullet_\sharp \mu(\tau_i)) \right\|_{L^q(\sigma_{n-1})}\\
&
\geq 
\left\| \sum_{i=1}^N \mk_p^{\mathbb{R}}( \rad^\bullet_\sharp \mu(\tau_{i-1}), \rad^\bullet_\sharp \mu(\tau_i)) \right\|_{L^q(\sigma_{n-1})}\\
&
\geq 
\left\| \mk_p^{\mathbb{R}}( \rad^\bullet_\sharp \mu_0, \rad^\bullet_\sharp \mu_1 ) \right\|_{L^q(\sigma_{n-1})}\\
&=\mk_{p,q} (\mu_0,\mu_1),
\end{align*}
consequently the inequalities above become equalities.
For $q\neq\infty$, this implies that
\[
\sum_{i=1}^N \mk_p^{\mathbb{R}}( \rad^\omega_\sharp \mu(\tau_{i-1}), \rad^\omega_\sharp \mu(\tau_i)) = \mk_p^{\mathbb{R}}( \rad^\omega_\sharp \mu_0, \rad^\omega_\sharp \mu_1 )
\]
for $\sigma_{n-1}$-a.e.\ $\omega$, 
and then for all $\omega\in \mathbb{S}^{n-1}$
by the continuity in Lemma~\ref{prelemma}~\eqref{conti}. In particular, $\rad^{\omega}_\sharp \mu(\cdot):[0,1]\to (\mathcal{P}_p(\mathbb{R}), \mk_p^{\mathbb{R}})$ is the (unique by Lemma~\ref{1unique}) minimal geodesic
from $R^{\omega}_\sharp \mu_0$ to~$R^{\omega}_\sharp \mu_1$ for every $\omega\in \S^{n-1}$.

Regarding the case $q=\infty$, notice that 
the continuity in Lemma~\ref{prelemma}~\eqref{conti} together with the compactness of $\mathbb{S}^{n-1}$
ensures the existence of $\omega_\ast \in \mathbb{S}^{n-1}$ such that 
\begin{align*}
\mk_{p,\infty} (\mu_0,\mu_1)
=\mk_p^{\mathbb{R}} ( \rad^{\omega_\ast}_\sharp \mu_0, \rad^{\omega_\ast}_\sharp \mu_1 ).
\end{align*}
Then it holds for such $\omega_\ast$ that 
\begin{align*}
\mk_{p,\infty} (\mu_0,\mu_1)
=\mk_p^{\mathbb{R}} ( \rad^{\omega_\ast}_\sharp \mu_0, \rad^{\omega_\ast}_\sharp \mu_1 )
&\leq 
\sum_{i=1}^N \mk_p^{\mathbb{R}}( \rad^{\omega_\ast}_\sharp \mu(\tau_{i-1}), \rad^{\omega_\ast}_\sharp \mu(\tau_i)) \\
&\leq
\left\| \sum_{i=1}^N \mk_p^{\mathbb{R}}( \rad^\bullet_\sharp \mu(\tau_{i-1}), \rad^\bullet_\sharp \mu(\tau_i)) \right\|_{L^\infty(\sigma_{n-1})}\\
&=\mk_{p,\infty} (\mu_0,\mu_1).
\end{align*}
Thus from Lemma~\ref{1unique}, 
for $1< p < \infty$, 
if $\mk_{p,\infty} (\mu_0,\mu_1)=\mk_p^{\mathbb{R}} ( \rad^{\omega_\ast}_\sharp \mu_0, \rad^{\omega_\ast}_\sharp \mu_1 )$, for some $\omega_\ast\in \S^{n-1}$, 
then
$\rad^{\omega_\ast}_\sharp \mu(\cdot):[0,1]\to (\mathcal{P}_p(\mathbb{R}), \mk_p^{\mathbb{R}})$ is the unique minimal geodesic from $R^{\omega_\ast}_\sharp \mu_0$ to~$R^{\omega_\ast}_\sharp \mu_1$.

Now let us take 
\[
\rast:=2-\sqrt{2}\in (0,1)
\]
and 
\[
\mu_0:=\frac14\left(  \delta^{\mathbb{R}^n}_{e_1+e_2}+  \delta^{\mathbb{R}^n}_{-e_1+e_2}+  \delta^{\mathbb{R}^n}_{-e_1-e_2}+  \delta^{\mathbb{R}^n}_{e_1-e_2}   \right),\qquad
\mu_1:=\frac14\left(  \delta^{\mathbb{R}^n}_{\rast e_1}+  \delta^{\mathbb{R}^n}_{\rast e_2}+  \delta^{\mathbb{R}^n}_{-\rast e_1}+  \delta^{\mathbb{R}^n}_{-\rast e_2}   \right).
\]
We first consider the case $q\neq\infty$. In this case, for  $\omega=e_1$, $e_2$,
since $(\rad^{\omega}_\sharp \mu(\tau))_{\tau\in[0,1]}$
is a minimal geodesic in $(\mathcal{P}_p(\mathbb{R}), \mk_p^{\mathbb{R}})$ 
and 
\[
\rad^{\omega}_\sharp \mu(0)=
\rad^{\omega}_\sharp \mu_0=\frac12\left( \delta^{\mathbb{R}}_{1}+\delta^{\mathbb{R}}_{-1}\right),\quad
\rad^{\omega}_\sharp \mu(1)=
\rad^{\omega}_\sharp \mu_1=
\frac14\left(  \delta^{\mathbb{R}}_{\rast}+  2\delta^{\mathbb{R}}_{0}+\delta^{\mathbb{R}}_{-\rast}\right),
\]
we have
\[
\rad^{\omega}_\sharp \mu(1/2)=
\frac14\left(  \delta^{\mathbb{R}}_{\frac{1+\rast}{2}}+  \delta^{\mathbb{R}}_{\frac{1}{2}}+\delta^{\mathbb{R}}_{-\frac{1}{2}}+\delta^{\mathbb{R}}_{-\frac{1+\rast}{2}}\right).
\]
This implies that 
\begin{align*}
1
&=
\rad^\omega_\sharp \mu (1/2)
\left(\left\{ \pm\frac{1+\rast}{2}, \pm \frac{1}{2}\right\}\right)
=\mu (1/2) 
\left(
\left\{ x\in \mathbb{R}^n \bigm|\ \langle x, \omega \rangle
\in
\left\{ \pm\frac{1+\rast}{2}, \pm\frac{1}{2}\right\}
\right\}\right),
\end{align*}
consequently  the support of $\mu(1/2)$ is contained in
\begin{align*}
E:=&\bigcap_{i=1}^2
\left\{  x\in \mathbb{R}^n \bigm| \langle e_i,x \rangle
\in
\left\{ \pm\frac{1+\rast}{2}, \pm \frac{1}{2}\right\}
\right\}.
\end{align*}
Similarly,  for  $\omega=(e_1+ e_2)/\sqrt{2}$,
we find that 
\[
\rad^{\omega}_\sharp \mu(1/2)=
\frac14\left(  \delta^{\mathbb{R}}_{\frac{2+\rast}{2\sqrt{2}}}+  \delta^{\mathbb{R}}_{\frac{\rast}{2\sqrt{2}}}
+ \delta^{\mathbb{R}}_{-\frac{\rast}{2\sqrt{2}}}+\delta^{\mathbb{R}}_{-\frac{2+\rast}{2\sqrt{2}}}\right)
\]
and  the support of $\mu(1/2)$ is contained in
\begin{align*}
E':=
\left\{  x\in \mathbb{R}^n \bigm| \langle e_1+e_2,x \rangle
\in
\left\{ \pm \frac{2+\rast}{2}, \pm \frac{\rast}{2}\right\}
\right\}.
\end{align*}
However this implies that 
the support of $\mu(1/2)$ is contained in $E\cap E'=\emptyset$, which is clearly a contradiction.
Thus there is no minimal geodesic in $(\mathcal{P}_p(\mathbb{R}^n),\mk_{p,q})$ from $\mu_0$ to $\mu_1$.

To handle the case $q=\infty$, we only need to verify that $e_1$, $e_2$, and $2^{-1/2}(e_1+e_2)$ are maximizers of $\omega\mapsto \mk_p^\R(\rad^\omega_\sharp\mu_0, \rad^\omega_\sharp\mu_1)$, and then we will reach the same contradiction as above. To this end we see that 
for $\omega\in \mathbb{S}^{n-1}$, there exist $\theta \in [0,2\pi)$ and $\norm\in [0,1]$ (possibly not unique) such that 
\[
\langle e_1,\omega\rangle =\norm \langle  e_1,\omega(\theta)\rangle, 
\quad
\langle e_2,\omega\rangle =\norm \langle  e_1,\omega(\theta)\rangle, 
\]
where we set 
\[
\omega(\theta):=\cos \theta e_1+\sin \theta e_2.
\]
This implies that
\begin{align*}
\mk_p^\mathbb{R}(R_\sharp^{\omega}\mu_0, R_\sharp^{\omega}\mu_1)
= \norm\mk_p^\mathbb{R}\left( R_\sharp^{\omega(\theta)}\mu_0 ,  R_\sharp^{\omega(\theta)}\mu_1\right)
\leq \mk_p^\mathbb{R}\left( R_\sharp^{\omega(\theta)}\mu_0 ,  R_\sharp^{\omega(\theta)}\mu_1\right),
\end{align*}
thus we conclude that 
\[
\mk_{p,\infty}(\mu_0,\mu_1)
=\sup_{\theta \in [0, 2\pi)} \mk_p^\mathbb{R}(R_\sharp^{\omega(\theta)}\mu_0, R_\sharp^{\omega(\theta)}\mu_1).
\]
For $\theta \in [0, 2\pi)$, 
from 
\begin{align*}
R_\sharp^{\omega(\theta)}\mu_0
&=\frac14\left( \delta^{\mathbb{R}}_{\cos \theta+\sin \theta}+  \delta^{\mathbb{R}}_{-\cos \theta+\sin \theta}+  \delta^{\mathbb{R}}_{-\cos \theta-\sin \theta}+  \delta^{\mathbb{R}}_{\cos \theta-\sin \theta}   \right),\\
R_\sharp^{\omega(\theta)}\mu_1
&=\frac14\left(  \delta^{\mathbb{R}}_{\rast\cos \theta}+  \delta^{\mathbb{R}}_{\rast\sin \theta}+  \delta^{\mathbb{R}}_{-\rast\cos \theta}+  \delta^{\mathbb{R}}_{-\rast\sin \theta}   \right),
\end{align*}
we observe that 
\begin{align*}
\mk_p^\mathbb{R}(R_\sharp^{\omega(\theta)}\mu_0, R_\sharp^{\omega(\theta)}\mu_1)
=
\mk_p^\mathbb{R}(R_\sharp^{\omega(\theta+\frac{\pi}{2})}\mu_0, 
R_\sharp^{\omega(\theta+\frac{\pi}{2})}\mu_1
) 
=
\mk_p^\mathbb{R}(
R_\sharp^{\omega(-\theta)}\mu_0, 
R_\sharp^{\omega(-\theta)}\mu_1
),
\end{align*}
which implies that 
\[
\mk_{p,\infty}(\mu_0,\mu_1)
=\max_{\theta \in [0,\pi/4]} \mk_p^\mathbb{R}(R_\sharp^{\omega(\theta)}\mu_0, R_\sharp^{\omega(\theta)}\mu_1).
\]

For $\theta \in [0,\pi/4]$, define 
\begin{align*}
w_p(\theta)&:=\mk_p^\mathbb{R}(R_\sharp^{\omega(\theta)}\mu_0, R_\sharp^{\omega(\theta)}\mu_1)^p.
\end{align*}
Then we calculate 
\begin{align*}
w_p(0)
&=\mk_p^\mathbb{R}\left(\frac12\left( \delta^{\mathbb{R}}_{1}+\delta^{\mathbb{R}}_{-1}\right),
\frac14\left(  \delta^{\mathbb{R}}_{\rast}+  2\delta^{\mathbb{R}}_{0}+\delta^{\mathbb{R}}_{-\rast}\right)\right)^p\\
&=\frac12 (1-\rast)^p+\frac12,\\
w_p\left(\frac{\pi}{4}\right)
&=\mk_p^\mathbb{R}\left(
\frac14\left( \delta^{\mathbb{R}}_{\sqrt{2}}+  2\delta^{\mathbb{R}}_{0}+  \delta^{\mathbb{R}}_{-\sqrt{2}}  \right),
\frac12\left(\delta^{\mathbb{R}}_{\frac{\rast}{\sqrt{2}}}+  \delta^{\mathbb{R}}_{-\frac{\rast}{\sqrt{2}}} \right) \right)^{p}\\
&=\frac12 \left(\sqrt{2}-\frac{\rast}{\sqrt{2}}\right)^p+\frac12\left(\frac{\rast}{\sqrt{2}}\right)^p.
\end{align*}
Since 
\[
1-\rast=\sqrt{2}-1=\frac{\rast}{\sqrt{2}}, \quad
\sqrt{2}-\frac{\rast}{\sqrt{2}}=\sqrt{2}-\frac{2-\sqrt{2}}{\sqrt{2}}=1,
\]
we conclude 
\[
w_p(0)=w_p(\pi/4).
\]
Note that,  
if we replace $\rast$ with any $r \in (0,1)$, then $w_p(0)=w_p(\pi/4)$ holds if and only if $r=\rast$ 
in the case of $p\neq 2$, but $w_p(0)=w_p(\pi/4)$ holds for any $r\in(0,1)$ in the case of $p=2$. 

Next we will prove  $w_p(\theta)< w_p(0)$ for $\theta\in (0,\pi/4)$.
Indeed, for $\theta \in (0,\pi/4)$, we have
\begin{gather*}
-\cos \theta-\sin \theta \leq   -\cos \theta+\sin \theta \leq  \cos \theta-\sin \theta \leq \cos \theta+\sin \theta , \\
-\rast\cos \theta \leq - \rast\sin \theta \leq \rast\sin \theta \leq \rast\cos \theta,
\end{gather*}
which yields
\begin{align*}
w_p(\theta)
&=
\frac12\left\{
\left(
\cos \theta+\sin \theta-\rast\cos \theta 
\right)^p
+\left|\cos \theta-\sin \theta-\rast\sin \theta\right|^p
\right\}\\
&=
\frac12\left[
\left\{
(1-\rast)\cos \theta+\sin \theta\right\}^p
+\left|\cos \theta-(1+\rast)\sin \theta\right|^p
\right],
\end{align*}
where we have used that $\theta \mapsto (1-\rast)\cos \theta+\sin \theta $ is strictly increasing on $(0,\pi/4)$.
Let 
\[
\theta_\ast:=\arctan\frac{1}{1+\rast}\in \left(0, \frac{\pi}{4}\right).
\]
On one hand, on $(0,\theta_\ast)$,
\[
\theta \mapsto \left|\cos \theta-(1+\rast)\sin \theta\right|=\cos \theta-(1+\rast)\sin \theta
\]
is strictly decreasing.
On the other hand,   on $(\theta_\ast,\pi/4)$,
\[
\theta \mapsto \left|\cos \theta-(1+\rast)\sin \theta\right|=-\cos \theta+(1+\rast)\sin \theta
\]
is strictly increasing. These imply that  $w_p(\theta) \leq w_p(\pi/4)=w_p(0)$ for $\theta \in (\theta_\ast, \pi/4)$.

Now assume $\theta \in (0, \theta_\ast)$ and set
\[
\alpha(\theta):=(1-\rast)\cos \theta+\sin \theta,\quad
\beta(\theta):=\cos \theta-(1+\rast)\sin \theta.
\]
Then 
$\alpha(\theta)$, $\beta(\theta)>0$ and  $w_p(\theta)=(\alpha(\theta)^p+\beta(\theta)^p)/2$.
We also find 
\begin{align*}
\alpha'(\theta)&=-(1-\rast)\sin \theta+\cos \theta=\frac{1}{\sqrt{2}}\left( \alpha(\theta)+\beta(\theta)\right),\\
\beta'(\theta)&=-\sin \theta-(1+\rast)\cos \theta=-\frac{1}{\sqrt{2}}\left( 3\alpha(\theta)+\beta(\theta)\right).
\end{align*}
Consequently, we have
\begin{align*}
w_p'(\theta)
&=
\left\{\frac12(\alpha(\theta)^p+\beta(\theta)^p)\right\}'
=\frac{p}{2\sqrt{2}}\left\{\alpha(\theta)^{p-1}\left( \alpha(\theta)+\beta(\theta)\right)-\beta(\theta)^{p-1}\left( 3\alpha(\theta)+\beta(\theta)\right)\right\}\\
&=\frac{p}{2\sqrt{2}}\beta(\theta)^p F_p(\alpha(\theta)\beta(\theta)^{-1}),
\end{align*}
where we define $F_p:(0,\infty)\to \mathbb{R}$ by
\[
F_p(\uvariable):=\uvariable^p+\uvariable^{p-1}-3\uvariable-1.
\]
Note that $\theta \mapsto\alpha(\theta)\beta(\theta)^{-1}$ is strictly increasing on $(0,\theta_\ast)$ and 
\[
 \alpha(0)\beta(0)^{-1}=1-\rast\qquad
\lim_{\theta \uparrow \theta_\ast}  \alpha(\theta)\beta(\theta)^{-1}=\infty.
\]
A direct calculation gives
\[
F'_p(\uvariable)=p\uvariable^{p-1}+(p-1)\uvariable^{p-2}-3, \quad
F''_p(\uvariable)=(p-1)\uvariable^{p-3}[p\uvariable+(p-2)],
\]
and since $p>1$ we see $F_p$ is strictly convex on $(1,\infty)$.
We also have 
\[
F_p(\uvariable)\leq \uvariable+1-3\uvariable-1=-2\uvariable<0\  \text{if } \uvariable\in (0,1], \quad
\lim_{\uvariable\to \infty}F_p(\uvariable)=\infty,
\]
thus there exists a unique $\uvariable_p\in (1,\infty)$ such that
$F_p(\uvariable)<0$ if $s\in (0,\uvariable_p)$ and $F_p(\uvariable)>0$ if $\uvariable\in (\uvariable_p,\infty)$.
Equivalently,
there exists a unique $\theta_p\in (0,\theta_\ast)$ such that
$w_p'(\theta)<0$ if $\theta\in (0,\theta_p)$ and $w_p'(\uvariable)>0$ if $\theta\in (\theta_p,\theta_\ast)$.
Consequently, we have $w_p(\theta) <\max\{ w_p(0)  ,w_p(\theta_\ast)\}=w_p(0)$ for $\theta \in (0,\theta_\ast)$. 
Thus we have
\[
w_p(\theta)< w_p(0)  \quad \text{for }  \theta\in (0,\pi/4).
\]
All together the above implies that for $\omega\in \mathbb{S}^{n-1}$, 
\[
\mk_{p,\infty}(\mu_0,\mu_1)=
\mk_p^\mathbb{R}(R_\sharp^{\omega}\mu_0, R_\sharp^{\omega}\mu_1)
\]
holds if and only if
\[
\omega 
\in
\left\{
\pm e_1, \pm e_2, 
\frac{1}{\sqrt{2}} (e_1 \pm e_2),
\frac{1}{\sqrt{2}} (-e_1 \pm e_2)
\right\},
\]
completing the desired verification, hence the proof.
\end{proof}
\subsection{Existence of sliced barycenters}
Finally, we prove the existence of $\mk_{p, q}$-barycenters by a simple compactness argument.
\begin{proof}[Proof of \nameref{thm: main sliced}~\eqref{thm: barycenters}]
For ease of notation, we write
\begin{align*}
    B(\nu):=\sum_{k=1}^K\lambda_k \mk_{p, q}(\mu_k, \nu)^{\barypower}.
\end{align*}
If $\barypower=0$, we see $B$ is constant on $\mathcal{P}_p(\mathbb{R}^n)$
 and the claim holds trivially, thus assume $\barypower\neq 0$.
 Since each $\mu_k\in \mathcal{P}_p(\R^n)$ and $B$ is nonnegative, it has a finite infimum and we may take a minimizing sequence $(\nu_j)_{j\in \mathbb{N}}\subset \mathcal{P}_p(\R^n)$; that is 
\[
\lim_{j\to\infty}B(\nu_j)=\inf_{\nu \in \mathcal{P}_p(\R^n)}B(\nu),
\]
and moreover we may assume $\sup_j B(\nu_j)\leq C<\infty$ for some $C>0$. Since $\lambda_1>0$, then for any $j$, using Lemma \ref{comparison} and then H\"older's inequality and that $p\leq q$, we have
\begin{align*}
 \left(\int_{\R^n}\left| x\right|^pd\nu_j(x)\right)^{\frac{1}{p}}
 &=\mk_p^{\R^n}(\delta_0^{\R^n}, \nu_j)=M_{p, n}\mk_{p, p}(\delta_0^{\R^n}, \nu_j)\\
 &\leq M_{p, n}\mk_{p, q}(\delta_0^{\R^n}, \nu_j)\\
 &\leq M_{p, n}\left( \mk_{p, q}(\delta_0^{\R^n}, \mu_1)+\mk_{p, q}(\mu_1, \nu_j)\right)\\
 &\leq M_{p, n}\left(\mk_{p, q}(\delta_0^{\R^n}, \mu_1)+(\lambda_1^{-1}C)^{\frac{1}{\barypower}}\right).
\end{align*}
Since the $p$th moments of the $\nu_j$ are uniformly bounded, we may pass to a subsequence (not relabeled) which converges weakly to some $\nu_\infty\in \mathcal{P}_p(\R^n)$ as $j\to\infty$ 
by Remark~\ref{rmk: mk compact}. 
Then as in \eqref{eq:counterpart}, we have
\[
\mk_p^\R(\rad^\omega_\sharp\mu_k, \rad^\omega_\sharp\nu_\infty)\leq \liminf_{j\to\infty} \mk_p^\R(\rad^\omega_\sharp\mu_k, \rad^\omega_\sharp\nu_j).
\]
Thus by Fatou's lemma, we obtain
\begin{align*}
 B(\nu_\infty)&=\sum_{k=1}^K\lambda_k \left\|\mk_p^{\R}(\rad^\bullet_\sharp\mu_{k}, \rad^\bullet_\sharp\nu_\infty)\right\|_{L^q(\sigma_{n-1})}^{\barypower}\\
 &\leq \sum_{k=1}^K\lambda_k\liminf_{j\to\infty} \left\|\mk_p^{\R}(\rad^\bullet_\sharp\mu_{k}, \rad^\bullet_\sharp\nu_j)\right\|_{L^q(\sigma_{n-1})}^{\barypower}\\
 &\leq \liminf_{j\to\infty}\sum_{k=1}^K\lambda_k \left\|\mk_p^{\R}(\rad^\bullet_\sharp\mu_{k}, \rad^\bullet_\sharp\nu_j)\right\|_{L^q(\sigma_{n-1})}^{\barypower}=\liminf_{j\to\infty}B(\nu_j),
\end{align*}
proving that $\nu_\infty$ is an 
$\mk_{p, q}$-barycenter of $\{\mu_k\}_{k=1}^K$.
\end{proof}
\subsection{Duality}
We will now formulate a dual problem for $\mk_{p, q}$ for $p\leq q$ in several steps. First  recall the following definition in a general setting.
\begin{definition}\label{def: c-transform}
For a metric space $(X, \dist_X)$ and $\phi\in C_b(X)$, we set
\[
\|\phi\|_{C_b(X)}
:=\sup_{s\in X} |\phi(s)|.
\]
Then for a function $\phi$ on $X$ and $s\in X$,
define the \emph{$\dist_X^p$-transform of $\phi$} by
\begin{align*}
\phi^{\dist_X^p}(s):=\sup_{t\in X} \left(-\dist_X(t,s)^p-\phi(t)\right) \in (-\infty,\infty].
\end{align*}
\end{definition}

We start with an elementary inequality, which we prove in the general setting of a metric space.
\begin{lemma}\label{cpp}
Let $(X, \dist_X)$ be a metric space.
If $\phi, \psi\in C_b(X)$, we have $\phi^{\dist_X^p}, \psi^{\dist_X^p}\in C_b(X)$ with
\[
 \| \phi^{\dist_X^p}\|_{C_b(X)} \leq \|\phi\|_{C_b(X)} 
\]
and $\|\phi^{\dist_X^p}-\psi^{\dist_X^p}\|_{C_b(X)} \leq \|\phi-\psi\|_{C_b(X)}$.
\end{lemma}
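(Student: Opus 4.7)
The plan is to prove the three assertions of Lemma~\ref{cpp} separately: the two-sided bound $\|\phi^{\dist_X^p}\|_{C_b(X)}\leq \|\phi\|_{C_b(X)}$, continuity of $\phi^{\dist_X^p}$, and the Lipschitz-type comparison. Only the middle step requires any genuine work.

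For the \textbf{two-sided bound}, I would first evaluate the supremum in the definition of $\phi^{\dist_X^p}(s)$ at the particular choice $t=s$ to obtain $\phi^{\dist_X^p}(s)\geq -\dist_X(s,s)^p-\phi(s)=-\phi(s)\geq -\|\phi\|_{C_b(X)}$. The matching upper bound $\phi^{\dist_X^p}(s)\leq \|\phi\|_{C_b(X)}$ is immediate from the two pointwise inequalities $-\dist_X(t,s)^p\leq 0$ and $-\phi(t)\leq \|\phi\|_{C_b(X)}$, and combining these gives the asserted norm inequality.

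The key step is \textbf{continuity}, which I expect to be the main obstacle, since naively $\phi^{\dist_X^p}$ is only a supremum of continuous functions and so is only guaranteed to be lower semicontinuous. The trick is to localize the supremum using the boundedness of $\phi$. If $\dist_X(t,s)^p>2\|\phi\|_{C_b(X)}$, then $-\dist_X(t,s)^p-\phi(t)<-\|\phi\|_{C_b(X)}\leq \phi^{\dist_X^p}(s)$ by the lower bound already established, so such $t$ never realize the supremum. Setting $R:=(2\|\phi\|_{C_b(X)})^{1/p}$, one may therefore restrict the supremum defining $\phi^{\dist_X^p}(s)$ to $t$ in the closed ball of radius $R$ around $s$, with no change in value. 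Given $s_1,s_2\in X$, both relevant sets of $t$ are contained in the common bounded set $B:=\{t\mid \dist_X(t,s_1)\leq R'\}\cap\{t\mid \dist_X(t,s_2)\leq R'\}$ with $R':=R+\dist_X(s_1,s_2)$, so both suprema may be taken over $B$. On $B$, the reverse triangle inequality $|\dist_X(t,s_1)-\dist_X(t,s_2)|\leq \dist_X(s_1,s_2)$ combined with the mean value theorem applied to $x\mapsto x^p$ on $[0,R']$ (valid since $p\geq 1$) gives the uniform-in-$t$ estimate
\[
|\dist_X(t,s_1)^p-\dist_X(t,s_2)^p|\leq p(R')^{p-1}\dist_X(s_1,s_2).
\]
Transferring this bound to the suprema defining $\phi^{\dist_X^p}(s_1)$ and $\phi^{\dist_X^p}(s_2)$ via a standard $\varepsilon$-argument shows $\phi^{\dist_X^p}$ is locally Lipschitz, hence continuous.

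For the \textbf{Lipschitz comparison}, I would note that for every $s,t\in X$,
\[
-\dist_X(t,s)^p-\phi(t)=-\dist_X(t,s)^p-\psi(t)+(\psi(t)-\phi(t))\leq \psi^{\dist_X^p}(s)+\|\phi-\psi\|_{C_b(X)},
\]
where the right-hand side no longer depends on $t$. Taking the supremum over $t$ on the left yields $\phi^{\dist_X^p}(s)-\psi^{\dist_X^p}(s)\leq \|\phi-\psi\|_{C_b(X)}$, and exchanging the roles of $\phi$ and $\psi$ produces the reverse inequality, hence the absolute-value bound. The only genuine content of the lemma is thus the localization used in the continuity step: the boundedness of $\phi$ is exploited precisely to replace an unbounded supremum over $X$ by an effective supremum over a fixed bounded set, on which equicontinuity of $\{\dist_X(t,\cdot)^p\}_t$ is immediate.
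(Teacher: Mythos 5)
Your proof is correct and rests on the same key idea as the paper's: the boundedness of $\phi$ localizes the supremum in the definition of $\phi^{\dist_X^p}$, after which the local Lipschitz behavior of $x\mapsto x^p$ delivers continuity. The boundedness estimate and the Lipschitz-comparison step are essentially identical to the paper's. The continuity step, however, is organized differently. You localize up front: you identify the ball of radius $R=(2\|\phi\|_{C_b(X)})^{1/p}$ outside of which no competitor $t$ can improve on $t=s$, place the relevant competitors for two nearby base points $s_1,s_2$ in a common bounded set, and then read off a symmetric, explicitly quantitative local Lipschitz modulus $p(R')^{p-1}$ in one stroke. The paper argues along a sequence $s_j\to s_0$ and estimates $\phi^{\dist_X^p}(s_0)-\phi^{\dist_X^p}(s_j)$ and its negative separately against near-optimal competitors $t_0$ and $t_j$, carrying out the localization implicitly by showing the near-optimizers $t_j$ form a bounded set (via a $2^{p-1}$-convexity bound on $\dist_X(t_j,s_0)^p$). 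Your version is a bit cleaner and yields the slightly stronger conclusion of local Lipschitz continuity with an explicit constant rather than mere sequential continuity; the two arguments are otherwise conceptually equivalent.
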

\begin{proof}
For any $s\in X$, we find that
\begin{align*}
\lVert \phi\rVert_{C_b(X)}\geq\sup_{t\in X}(-\phi(t))\geq \phi^{\dist_X^p}(s)
 \geq -\dist_X(s,s)^p-\phi(s)
 =-\phi(s)
 \geq 
 -\|\phi\|_{C_b(X)},
 \end{align*}
thus $\phi^{\dist_X^p}$ is bounded on $X$. 

Next suppose $(s_j)_{j\in \mathbb{N}}$ is a sequence in $X$ converging to some $s_0$.
Fix $\varepsilon>0$.
Then since $\phi^{\dist_X^p}$ is bounded from above, there exists $t_0\in X$ such that 
$\phi^{\dist_X^p}(s_0)\leq -\dist_X(t_0, s_0)^p-\phi(t_0)+\varepsilon$ thus
\begin{align}\label{cpconti}
\begin{split}
    \phi^{\dist_X^p}(s_0)-\phi^{\dist_X^p}(s_j)
    &\leq -\dist_X(t_0, s_0)^p+\dist_X(t_0, s_j)^p+\varepsilon\\
    &\leq p\cdot\max\{\dist_X(t_0, s_j)^{p-1}, \dist_X(t_0, s_0)^{p-1}\}\lvert\dist_X(t_0, s_j)-\dist_X(t_0, s_0)\rvert+\varepsilon\\
    &\leq p\cdot\max\{\dist_X(t_0, s_j)^{p-1}, \dist_X(t_0, s_0)^{p-1}\}\dist_X(s_j, s_0)+\varepsilon\\
    &< 2\varepsilon
\end{split}
\end{align}
if $j$ is sufficiently large. Similarly, for any $j\in \N$,
we have 
\begin{align}\label{cpcontii}
    \phi^{\dist_X^p}(s_j)-\phi^{\dist_X^p}(s_0)
    &\leq p\max\{\dist_X(t_j, s_j)^{p-1}, \dist_X(t_j, s_0)^{p-1}\}\dist_X(s_j, s_0)+\varepsilon,
\end{align}
where $t_j\in X$ satisfies  $\phi^{\dist_X^p}(s_j)\leq -\dist_X(t_j, s_j)^p-\phi(t_j)+\varepsilon$.
We also find that 
\begin{align*}
    \dist_X(t_j, s_0)^p&\leq 2^{p-1}(\dist_X(s_0, s_j)^p+\dist_X(t_j, s_j)^p)\\
    &\leq 2^{p-1}(\dist_X(s_0, s_j)^p-\phi^{\dist_X^p}(s_j)-\phi(t_j)+\varepsilon)
    \leq 2^{p-1}(\dist_X(s_0, s_j)^p+2\lVert\phi\rVert_{C_b(X)}+\varepsilon),
\end{align*}
which implies that  $\{t_j\}_{j\in\N}$ is bounded, consequently for $j$ sufficiently large,
\begin{align*}
    \phi^{\dist_X^p}(s_j)-\phi^{\dist_X^p}(s_0)
    &\leq p\max\{\dist_X(t_j, s_j)^{p-1}, \dist_X(t_j, s_0)^{p-1}\}\dist_X(s_j, s_0)+\varepsilon<2\varepsilon.
\end{align*}
Thus we see $\phi^{\dist_X^p}$ is continuous.

Now by definition, 
\[
-\dist_X(t, s)^p\leq \psi(t)+\psi^{\dist_X^p}(s) 
\quad \text{for\ }t, s\in X,
\]
hence
\begin{align*}
 \phi^{\dist_X^p}(s)-\psi^{\dist_X^p}(s)
&=\sup_{t\in X} \left(-\dist_X(t, s)^p-\phi(t)\right)-\psi^{\dist_X^p}(s)\\
&\leq \sup_{t\in X} \left(\psi(t)+\psi^{\dist_X^p}(s)-\phi(t)\right)-\psi^{\dist_X^p}(s)\\
&=\|\phi-\psi\|_{C_b(X)},
\end{align*}
 and switching the roles of $\phi$ and $\psi$
completes the proof of the lemma.
\end{proof}
For the remainder of this section, we write $\dist_\R$ for the Euclidean distance on $\R$.
We now recall the classical duality for $\mk_p^\R$, also known as \emph{Kantorovich duality}, which will be the basis of a duality theory for $\mk_{p, q}$.
\begin{theorem}[\cite{Villani09}*{Theorem~5.10}]\label{Kantorovich}
Let $1\leq p<\infty$, then for $\mu,\nu\in \mathcal{P}(\R)$,  
\begin{align*}
\mk_p^\R(\mu,\nu)^p
&=\sup
\left\{
-\int_\R\phi d\mu-\int_\R\psi d\nu
\Biggm|
\begin{tabular}{ll}
$(\phi,\psi)\in C_b(\R)^2$,\\
with $-\phi(t)-\psi(s) \leq \lvert t-s\rvert^p$ 
for $(t,s)\in \R^2$
\end{tabular}
\right\}\\
&=\sup
\left\{
-\int_\R\phi d\mu-\int_\R\phi^{\dist_\R^p} d\nu
\Bigm|
\phi\in C_b(\R)
\right\}.
\end{align*}
\end{theorem}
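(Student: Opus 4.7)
My plan has two parts: (i) reduce the two supremum formulations to each other via Lemma~\ref{cpp} and establish the easy inequality $\leq$, and (ii) prove the reverse inequality by a minimax argument.

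For (i): given an admissible $(\phi,\psi)\in C_b(\R)^2$, the pair $(\phi,\phi^{\dist_\R^p})$ is also admissible by the definition of the $\dist_\R^p$-transform, and $\phi^{\dist_\R^p}\in C_b(\R)$ by Lemma~\ref{cpp}; moreover the constraint $-\phi(t)-\psi(s)\leq|t-s|^p$ for all $t$ gives $\phi^{\dist_\R^p}(s)\leq\psi(s)$ pointwise, so replacing $\psi$ by $\phi^{\dist_\R^p}$ cannot decrease the objective $-\int\phi d\mu-\int\psi d\nu$. The second supremum is trivially no larger than the first, hence the two agree. For weak duality, if $\gamma\in\Pi(\mu,\nu)$ and $(\phi,\psi)$ is admissible, then
\begin{align*}
-\int_\R\phi d\mu-\int_\R\psi d\nu=\int_{\R^2}\bigl(-\phi(t)-\psi(s)\bigr)d\gamma(t,s)\leq\int_{\R^2}|t-s|^p d\gamma(t,s);
\end{align*}
taking an infimum over $\gamma$ and a supremum over $(\phi,\psi)$ gives the $\leq$ direction.

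For (ii), denote the common supremum by $S$. I would write
\begin{align*}
\mk_p^\R(\mu,\nu)^p=\inf_{\gamma\in\mathcal{P}(\R^2)}\sup_{(\phi,\psi)\in C_b(\R)^2}\Bigl\{\int|t-s|^p d\gamma+\int\phi d\mu+\int\psi d\nu-\int\bigl(\phi(t)+\psi(s)\bigr)d\gamma\Bigr\},
\end{align*}
since the inner supremum forces $\gamma\in\Pi(\mu,\nu)$ (otherwise it is $+\infty$) and reduces to $\int|t-s|^p d\gamma$ on $\Pi(\mu,\nu)$. After interchanging $\inf$ and $\sup$, the inner infimum over $\gamma\in\mathcal{P}(\R^2)$ equals $\inf_{(t,s)}\bigl[|t-s|^p-\phi(t)-\psi(s)\bigr]+\int\phi d\mu+\int\psi d\nu$; absorbing the first term into a constant shift of $\phi$ reduces to the outer sup over pairs with $\phi(t)+\psi(s)\leq|t-s|^p$ of $\int\phi d\mu+\int\psi d\nu$, which coincides with $S$ after the sign flip $(\phi,\psi)\mapsto(-\phi,-\psi)$.

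The main obstacle is justifying this inf--sup swap, since $\mathcal{P}(\R^2)$ is not weakly compact and $|t-s|^p$ is unbounded on $\R^2$. My approach is the standard two-step reduction: first establish the duality for $\mu,\nu$ supported in a common compact interval $[-N,N]$, where the cost is bounded continuous and Fenchel--Rockafellar duality applies cleanly on the Banach space $C([-N,N])^2$ (whose topological dual is exactly the finite Radon measures by Riesz--Markov), producing a Radon dual optimizer $\gamma^\ast\in\Pi(\mu,\nu)$; then pass $N\to\infty$, using finite $p$-moments of $\mu,\nu$ (when $\mk_p^\R(\mu,\nu)<\infty$) to control the truncated marginals and extracting weak limits of near-optimal $c$-concave potentials, whose $\dist_\R^p$-transforms inherit uniform equicontinuity on compacts from $|t-s|^p$ by an Arzel\`a--Ascoli argument.
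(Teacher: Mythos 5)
The paper does not prove this theorem; it imports it directly as a citation of \cite{Villani09}*{Theorem 5.10}, so there is no ``paper's own proof'' to compare against. What you have written is a sketch of the classical Kantorovich duality proof, and it does follow the standard route that Villani himself uses (Fenchel--Rockafellar duality on a compact truncation, then a limiting argument). Your part (i) is correct and complete: the equivalence of the two supremum formulations via the $\dist_\R^p$-transform, and the weak duality inequality, are both argued properly, and the $c$-transform step $\phi^{\dist_\R^p}\leq\psi$ is exactly right.

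The gap is in part (ii), and you are aware of it, but let me flag where the remaining work actually lies. First, the inf--sup swap is not the issue per se; it is the passage $N\to\infty$. When you extract limits of near-optimal potentials $\phi_N$ from the truncated problems, you must first normalize (e.g. $\phi_N(0)=0$), then use local semiconcavity of $\dist_\R^p$-conjugates to get local uniform bounds and equicontinuity, and then truncate the (possibly unbounded) limit potential back into $C_b(\R)$ using the fact that the objective is a supremum; ``Arzel\`a--Ascoli on compacts'' is the right instinct but does not by itself give the uniform a priori bounds. Second, the statement is for all $\mu,\nu\in\mathcal{P}(\R)$, not merely those with finite $p$th moment, so $\mk_p^\R(\mu,\nu)^p$ can equal $+\infty$; your sketch explicitly conditions on $\mk_p^\R(\mu,\nu)<\infty$, and you would need a separate monotone limiting argument to handle the infinite case (showing the supremum on the right is also $+\infty$). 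These are exactly the points Villani devotes the bulk of his proof to; since the paper simply cites that proof, the gaps you have acknowledged are the substantive ones.
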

The duality for $\mk_{p, q}(\mu, \nu)$ comes about in the natural way, by applying the classical Kantorovich duality to the pair $\rad^\omega_\sharp\mu$ and $\rad^\omega_\sharp\nu$ for each $\omega\in \S^{n-1}$, then ``gathering'' the corresponding problems. However, we must be careful of the dependencies on the variable~ $\omega$ that arise.
\begin{definition}\label{FFF}
For $\mu$, $\nu\in \mathcal{P}_p(\R^n)$ with $1\leq p<\infty$
and $\varepsilon>0$,
we define a set-valued function $F^{\mu,\nu}_\varepsilon$ from $\mathbb{S}^{n-1}$ to $2^{C_b(\mathbb{R})}$ by
\begin{align*}
F^{\mu,\nu}_\varepsilon(\omega)
&:=\left\{
\phi \in C_b(\mathbb{R}) \Biggm|
 -\int_{\R}\phi d\rad^\omega_\sharp\mu-\int_{\R}\phi^{\dist_{\mathbb{R}}^p} d\rad^\omega_\sharp\nu
 > \mk_p^{\R}(\rad^\omega_\sharp\mu, \rad^\omega_\sharp\nu)^p-\varepsilon
\right\}.
\end{align*}
\end{definition}
Let us recall the following approximate selection property due to Michael.
\begin{proposition}[\cite{Michael56}*{Lemma~1}]\label{Michael56}
Let $\Omega$ be a paracompact space, and $(X,\|\cdot\|_X)$ a normed space.
For a map $F:\Omega\to 2^X$,
if $F(\omega)$ is nonempty and convex for any $\omega\in \Omega$,
and $F$ is lower semi-continuous, that is, 
$\left\{\omega\in \Omega \mid F(\omega) \cap O\right\}$ is open in $\Omega$ for any open set $O$ in $X$, 
then, for any $r>0$,
there exists a continuous map $f:\Omega \to X$ such that 
\[
\inf_{x\in F(\omega)} \|x-f(\omega)\|_X<r
\]
for every $\omega\in \Omega$.
\end{proposition}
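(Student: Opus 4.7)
The plan is to use the standard partition-of-unity construction that underlies Michael's approximate selection argument. First, for each $\omega_0\in\Omega$ I would choose an arbitrary element $x_{\omega_0}\in F(\omega_0)$, which is possible since $F(\omega_0)$ is nonempty. Lower semi-continuity of $F$, applied to the open set $O_{\omega_0}:=\{x\in X\mid \|x-x_{\omega_0}\|_X<r\}$, guarantees that
\[
U_{\omega_0}:=\{\omega\in\Omega\mid F(\omega)\cap O_{\omega_0}\neq \emptyset\}
\]
is an open neighborhood of $\omega_0$, so the collection $\{U_{\omega_0}\}_{\omega_0\in\Omega}$ is an open cover of $\Omega$.

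Second, I would invoke paracompactness of $\Omega$ to extract a locally finite open refinement $\{V_\alpha\}_{\alpha\in A}$ together with a subordinate continuous partition of unity $\{\phi_\alpha\}_{\alpha\in A}$. For each $\alpha\in A$, fix some $\omega_\alpha\in\Omega$ with $V_\alpha\subset U_{\omega_\alpha}$ and set $x_\alpha:=x_{\omega_\alpha}$. I would then define
\[
f(\omega):=\sum_{\alpha\in A}\phi_\alpha(\omega)\,x_\alpha,
\]
which is a well-defined continuous map from $\Omega$ to $X$ by local finiteness of the sum.

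Finally, I would verify the approximation bound. Fix $\omega\in\Omega$ and let $A(\omega):=\{\alpha\in A\mid \phi_\alpha(\omega)>0\}$, which is finite. For each $\alpha\in A(\omega)$, we have $\omega\in V_\alpha\subset U_{\omega_\alpha}$, so one can select some $y_\alpha\in F(\omega)$ with $\|y_\alpha-x_\alpha\|_X<r$. Since $F(\omega)$ is convex and $\sum_{\alpha\in A(\omega)}\phi_\alpha(\omega)=1$, the point $\tilde f(\omega):=\sum_{\alpha\in A(\omega)}\phi_\alpha(\omega)\,y_\alpha$ lies in $F(\omega)$, and
\[
\|f(\omega)-\tilde f(\omega)\|_X\leq \sum_{\alpha\in A(\omega)}\phi_\alpha(\omega)\|x_\alpha-y_\alpha\|_X<r,
\]
so $\inf_{x\in F(\omega)}\|x-f(\omega)\|_X<r$, as required. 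The main subtlety is that the convexity hypothesis on each $F(\omega)$ is what allows $\tilde f(\omega)$ to remain inside $F(\omega)$, while paracompactness is essential to produce partitions of unity subordinate to arbitrary open covers; neither assumption can be dropped without breaking the argument.
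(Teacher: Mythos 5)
The paper cites this result directly from Michael's 1956 paper rather than proving it, and your proof is precisely the standard partition-of-unity argument that Michael uses for his Lemma 1. The argument is correct: lower semi-continuity makes each $U_{\omega_0}$ open, paracompactness supplies the locally finite refinement and subordinate partition of unity, and the convexity of each $F(\omega)$ is exactly what is needed so that the competitor $\tilde f(\omega)=\sum_{\alpha\in A(\omega)}\phi_\alpha(\omega)y_\alpha$ stays inside $F(\omega)$, giving $\inf_{x\in F(\omega)}\|x-f(\omega)\|_X<r$.
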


We will need to apply this approximate selection result to $F^{\mu, \nu}_\varepsilon$.
\begin{lemma}\label{lem: lsc}
Let $\mu$, $\nu\in \mathcal{P}_p(\R^n)$ and $\varepsilon>0$.
Then $F^{\mu,\nu}_\varepsilon (\omega)$ is convex and nonempty for each $\omega \in \mathbb{S}^{n-1}$,
and $F^{\mu,\nu}_\varepsilon$ is lower semi-continuous.
\end{lemma}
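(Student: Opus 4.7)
The plan is to verify the three claims—nonemptiness, convexity, and lower semi-continuity of $F^{\mu,\nu}_\varepsilon$—in that order, leveraging Kantorovich duality on $\R$ together with the continuity established in Lemma~\ref{prelemma} and Lemma~\ref{cpp}.

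For nonemptiness, I would apply Theorem~\ref{Kantorovich} to the pair $(\rad^\omega_\sharp\mu,\rad^\omega_\sharp\nu)$: since $\mk_p^{\R}(\rad^\omega_\sharp\mu,\rad^\omega_\sharp\nu)^p$ is the supremum over $\phi\in C_b(\R)$ of the quantity $-\int_\R\phi\,d\rad^\omega_\sharp\mu-\int_\R\phi^{\dist_\R^p}\,d\rad^\omega_\sharp\nu$, some $\phi$ must come within $\varepsilon$ of this supremum, placing it in $F^{\mu,\nu}_\varepsilon(\omega)$. For convexity, my strategy is to observe that $\phi\mapsto\phi^{\dist_\R^p}$ is pointwise convex in $\phi$: for $\lambda\in[0,1]$, the identity
\begin{align*}
-\dist_\R(t,s)^p-(\lambda\phi_0(t)+(1-\lambda)\phi_1(t))=\lambda(-\dist_\R(t,s)^p-\phi_0(t))+(1-\lambda)(-\dist_\R(t,s)^p-\phi_1(t))
\end{align*}
together with $\sup(A+B)\le\sup A+\sup B$ gives $(\lambda\phi_0+(1-\lambda)\phi_1)^{\dist_\R^p}\leq \lambda\phi_0^{\dist_\R^p}+(1-\lambda)\phi_1^{\dist_\R^p}$. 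Hence the dual functional $\phi\mapsto -\int\phi\,d\rad^\omega_\sharp\mu-\int\phi^{\dist_\R^p}\,d\rad^\omega_\sharp\nu$ is concave in $\phi$ (linear plus concave), and $F^{\mu,\nu}_\varepsilon(\omega)$, being a strict superlevel set of a concave functional, is convex.

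For lower semi-continuity, I would fix an open $O\subset C_b(\R)$, a point $\omega_0$ with $F^{\mu,\nu}_\varepsilon(\omega_0)\cap O\neq\emptyset$, and some $\phi_0$ in that intersection, and then show that $\phi_0\in F^{\mu,\nu}_\varepsilon(\omega)\cap O$ for all $\omega$ sufficiently close to $\omega_0$—since $O$ is independent of $\omega$, only membership in $F^{\mu,\nu}_\varepsilon(\omega)$ needs to be checked. The crux is that both sides of the defining strict inequality vary continuously in $\omega$. The right-hand side is continuous by Lemma~\ref{prelemma}\eqref{twocont}. For the left-hand side, Lemma~\ref{cpp} guarantees $\phi_0^{\dist_\R^p}\in C_b(\R)$, so both $x\mapsto\phi_0(\langle x,\omega\rangle)$ and $x\mapsto\phi_0^{\dist_\R^p}(\langle x,\omega\rangle)$ are uniformly bounded continuous functions on $\R^n$; a routine dominated-convergence argument, identical to the one used in Lemma~\ref{prelemma}\eqref{conti}, shows that the integrals against $\mu$ and $\nu$ are continuous functions of $\omega$. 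The strict inequality holding at $\omega_0$ therefore persists on a neighborhood.

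Essentially everything is routine; the one subtle point is insisting that a \emph{single} test function $\phi_0$ suffice for a whole neighborhood of $\omega_0$ (rather than choosing a new $\phi$ for each $\omega$). This is precisely the form of lower semi-continuity needed in order to feed $F^{\mu,\nu}_\varepsilon$ into Michael's approximate selection theorem (Proposition~\ref{Michael56}) in the subsequent duality argument.
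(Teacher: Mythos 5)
Your proposal is correct and follows essentially the same route as the paper: nonemptiness via Kantorovich duality, convexity by showing $\phi\mapsto\phi^{\dist_\R^p}$ is convex (so the dual functional is concave and $F^{\mu,\nu}_\varepsilon(\omega)$ is a superlevel set), and lower semi-continuity by arguing that for fixed $\phi_0$, the quantity $\omega\mapsto -\int\phi_0\,d\rad^\omega_\sharp\mu-\int\phi_0^{\dist_\R^p}\,d\rad^\omega_\sharp\nu-\mk_p^\R(\rad^\omega_\sharp\mu,\rad^\omega_\sharp\nu)^p$ is continuous, so the strict inequality persists near $\omega_0$. The paper packages the last step as writing $\{\omega:F(\omega)\cap O\neq\emptyset\}=\bigcup_{\phi\in O}G_\phi^{-1}((-\varepsilon,\infty))$, but this is just your pointwise-neighborhood argument expressed globally, and your observation that a single $\phi_0$ must work on a whole neighborhood is exactly the point.
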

\begin{proof}
Since $\mu$, $\nu\in \mathcal{P}_p(\R^n)$ and $\varepsilon>0$ are fixed, we write $F$ in place of $F^{\mu,\nu}_\varepsilon$. It is clear from Theorem~\ref{Kantorovich} that $F(\omega)\neq \emptyset$ for all $\omega\in \S^{n-1}$.

Next we show that $F(\omega)$ is convex for each $\omega \in \mathbb{S}^{n-1}$. 
Let $\phi_0$, $\phi_1\in F(\omega)$, and $\tau\in (0, 1)$. 
Then, for any $s\in \R$, we find that 
\begin{align*}
\left[ (1-\tau)\phi_0+\tau\phi_1 \right]^{\dist_\R^p}(s)
&
=\sup_{t\in \R}
\left[
(1-\tau)\left(-\dist_\R(t, s)^p-\phi_0(t)\right)
+\tau\left(-\dist_\R(t, s)^p-\phi_1(t)\right)\right]\\
&\leq \sup_{t\in \R}
\left[
(1-\tau)(-\dist_\R(t, s)^p-\phi_0(t))\right]
+\sup_{t\in \R}
\left[
\tau(-\dist_\R(t, s)^p-\phi_1(t))\right]\\
&=(1-\tau)\phi_0^{\dist_\R^p}(s)+\tau \phi_1^{\dist_\R^p}(s).
\end{align*}
Hence, 
\begin{align*}
&-\int_{\R}\left[(1-\tau)\phi_0+\tau\phi_1\right)]d\rad^\omega_\sharp\mu
-\int_{\R}\left[(1-\tau)\phi_0+\tau\phi_1\right]^{\dist_\R^p} d\rad^\omega_\sharp\nu\\
&\geq (1-\tau)\left(-\int_{\R}\phi_0d\rad^\omega_\sharp\mu-\int_{\R}\phi_0^{\dist_\R^p}d\rad^\omega_\sharp\nu\right)+\tau\left(-\int_{\R}\phi_1d\rad^\omega_\sharp\mu-\int_{\R}\phi_1^{\dist_\R^p}d\rad^\omega_\sharp\nu\right)\\
&> \mk_p^{\R}(\rad^\omega_\sharp\mu, \rad^\omega_\sharp\nu)^p-\varepsilon,
\end{align*}
proving the convexity of $F(\omega)$.

Finally, we prove lower semi-continuity of $F$. 
Given $\phi\in C_b(\R)$
define $G_\phi: \S^{n-1}\to [-\infty,\infty)$ by
\begin{align*}
 G_\phi(\omega):=-\int_{\R}\phi d\rad^\omega_\sharp\mu-\int_{\R}\phi^{\dist_\R^p} d\rad^\omega_\sharp\nu-\mk_p^{\R}(\rad^\omega_\sharp\mu, \rad^\omega_\sharp\nu)^p.
\end{align*}
We see that
$\phi\in F(\omega)$ if and only if $G_\phi(\omega)>-\varepsilon$,
 hence if $G_\phi$ is continuous on $\mathbb{S}^{n-1}$ 
and $O\subset C_b(\R)$ is open, 
then we have
\begin{align*}
 \{ \omega\in \S^{n-1}\ |\ F(\omega) \cap O \neq \emptyset \}&=\bigcup_{\phi\in O}G_\phi^{-1}((-\varepsilon, \infty))
\end{align*}
which is then open as a union of open sets. 
In particular, this would prove $F$ is lower semi-continuous, thus the rest of the proof is devoted to showing the continuity of $G_\phi$.

Let $(\omega_j)_{j\in \N}$ be a sequence in $\mathbb{S}^{n-1}$ converging to $\omega$. 
For $\phi\in C_b(\R)$, by Lemma~\ref{cpp} we have that $\phi^{\dist_\R^p}\in C_b(\mathbb{R})$.
We observe from Lemma~\ref{prelemma}~\eqref{conti} together with Theorem~\ref{thm: wassconv} that 
\[
 \lim_{j\to\infty}
 \left(-\int_{\R}\phi d\rad^{\omega_j}_\sharp\mu-\int_{\R}\phi^{\dist_\R^p} d\rad^{\omega_j}_\sharp\nu\right)
 =-\int_{\R}\phi d\rad^\omega_\sharp\mu-\int_{\R}\phi^{\dist_\R^p} d\rad^\omega_\sharp\nu. 
\]
This with Lemma~\ref{prelemma} \eqref{twocont} yields 
\begin{align*}
 \lim_{j\to\infty}G_\phi(\omega_j)=G_\phi(\omega),
\end{align*}
hence $G_\phi$ is continuous.
\end{proof}
Now we are in position to prove the claimed duality result.
\begin{proof}[Proof of \nameref{thm: main sliced}~\eqref{thm: sliced duality}]
Let $(\Phi, \Psi)\in \mathcal{A}_p$, and $\zeta\in \mathcal{Z}_{r'}$; recall that we now assume $p\leq q$ and write $r:=q/p$.
By the dual representation for $L^r$-norms (see \cite{Folland99}*{Proposition 6.13}, this is applicable for the case $p=q$ as well since $\sigma_{n-1}$ is finite), and the Kantorovich duality \nameref{Kantorovich},
\begin{align*}
\mk_{p, q}(\mu, \nu)^p
&=\lVert\mk_p^{\R}(\rad^\bullet_\sharp\mu, \rad^\bullet_\sharp\nu)^p\rVert_{L^{r}(\sigma_{n-1})}\\
&\geq
\int_{\S^{n-1}}\zeta
\left(
-\int_{\mathbb{R}} \Phi_\bullet(t) d\rad^\bullet_\sharp\mu(t)
-\int_{\mathbb{R}} \Psi_\bullet(s) d\rad^\bullet_\sharp\nu(s)
\right)d\sigma_{n-1}.
\end{align*}

Next we show the reverse inequality.
Since the case of $\mu=\nu$ is trivial, we assume $\mu \neq \nu$.
Fix $\varepsilon>0$, then by Lemma~\ref{lem: lsc}, we can apply Proposition~\ref{Michael56} to find $f_{\bullet}\in C( \S^{n-1}; C_b(\R) )$ such that
\[
\inf_{\phi\in F^{\mu,\nu}_\varepsilon(\omega)}\lVert \phi-f_\omega\rVert_{C_b(\R)}<\varepsilon
\quad
\text{for all\ }\omega\in \S^{n-1},
\]
in particular for each $\omega\in \S^{n-1}$, there exists $\phi^\omega \in F^{\mu,\nu}_\varepsilon(\omega)$ 
such that $\lVert \phi^\omega-f_\omega\rVert_{C_b(\R)}<\varepsilon$. 
This with Lemma~\ref{cpp} yields
$\lVert (\phi^\omega)^{\dist_\R^p}-f_\omega^{\dist_\R^p}\rVert_{C_b(\R)}<\varepsilon$.
From Lemma~\ref{cpp} we also find $f_\bullet^{\dist_\R^p}\in C(\mathbb{S}^{n-1};C_b(\mathbb{R}))$
hence $(f_\bullet, f_\bullet^{\dist_\R^p})\in \mathcal{A}_p$.
Then we calculate
\begin{align}
\begin{split} \label{eqn: selection estimate}
&-\int_{\R^n}f_\omega(\langle x,\omega\rangle)d\mu(x)
-\int_{\R^n}
f_\omega^{\dist_\R^p}(\langle y,\omega\rangle)d\nu(y) \\
&\geq-\int_{\R^n}\phi^\omega(\langle x, \omega\rangle)d\mu(x)
-\int_{\R^n}(\phi^\omega)^{\dist_\R^p}(\langle y,\omega\rangle)d\nu(y)
-2\varepsilon\\
&=-\int_{\R}\phi^\omega d\rad^\omega_\sharp\mu 
-\int_{\R}(\phi^\omega)^{\dist_\R^p} d\rad^\omega_\sharp\nu-2\varepsilon\\
&\geq \mk_p^{\R}(\rad^\omega_\sharp\mu, \rad^\omega_\sharp\nu)^p-3\varepsilon,
\end{split} 
\end{align}
where the last inequality follows from $\phi^\omega \in F^{\mu,\nu}_\varepsilon(\omega)$.
Since $\mk_{p, q}(\mu, \nu)<\infty$, then again by dual representation for the $L^{r}$-norm (\cite{Folland99}*{Proposition 6.13}), 
there exists some $\tilde\zeta\in L^{r'}(\sigma_{n-1})$ with $\lVert \tilde\zeta\rVert_{L^{r'}(\sigma_{n-1})}<1$ such that
\begin{align}\label{eqn: Lq/p norm dual estimate 1 sliced}
 \int_{\S^{n-1}}
 \tilde\zeta\mk_p^\R(\rad^\bullet_\sharp\mu, \rad^\bullet_\sharp\nu)^p d\sigma_{n-1}
 &> \lVert\mk_p^\R(\rad^\bullet_\sharp\mu, \rad^\bullet_\sharp\nu)^p\rVert_{L^{r}(\sigma_{n-1})}-\varepsilon=\mk_{p, q}(\mu, \nu)^p-\varepsilon,
\end{align}
since $\mk_p^\R(\rad^\bullet_\sharp\mu, \rad^\bullet_\sharp\nu)^p\geq 0$ we may assume $\tilde{\zeta}\geq 0$ holds $\sigma_{n-1}$-a.e..
If $p<q$ we have $r'<\infty$, 
then by density of $C_b(\S^{n-1})$ in $L^{r'}(\sigma_{n-1})$,
we can find $\zeta\in C_b(\S^{n-1})$ such that
\[
\lVert \zeta-\tilde\zeta\rVert_{L^{r'}(\sigma_{n-1})}<\mk_{p, q}(\mu, \nu)^{-p}\varepsilon, \qquad
\lVert \zeta\rVert_{L^{r'}(\sigma_{n-1})}< 1,
\]
and we may assume $\zeta\geq 0$ everywhere on $\mathbb{S}^{n-1}$.
If $p=q$ we have $r'=\infty$, 
thus by positivity of $\mk_p^\R(\rad^\omega_\sharp\mu, \rad^\omega_\sharp\nu)^p$ 
we can choose $\zeta=\tilde{\zeta}\equiv 1$ on $\mathbb{S}^{n-1}$.
Then using H\"older's inequality, 
\begin{align*}
\int_{\S^{n-1}}
\left|(\zeta- \tilde\zeta) \mk_p^\R(\rad^\bullet_\sharp\mu, \rad^\bullet_\sharp\nu)^p
 \right| d\sigma_{n-1}
\leq 
\lVert \zeta-\tilde\zeta\rVert_{L^{r'}(\sigma_{n-1})}
\|\mk_p^\R(\rad^\bullet_\sharp\mu, \rad^\bullet_\sharp\nu)^p\|_{L^r(\sigma_{n-1})}
<\varepsilon,
\end{align*}
which yields by~\eqref{eqn: Lq/p norm dual estimate 1 sliced},
\begin{align}\label{eqn: Lq/p norm dual estimate 2 sliced}
 \int_{\S^{n-1}}\zeta\mk_p^\R(\rad^\bullet_\sharp\mu, \rad^\bullet_\sharp\nu)^pd\sigma_{n-1}&> \mk_{p, q}(\mu, \nu)^p-2\varepsilon,
\end{align}
since $\mk_p^\R(\rad^\omega_\sharp\mu, \rad^\omega_\sharp\nu)\geq 0$ we may add a small constant to $\zeta$ to assume that $\zeta>0$ on $\S^{n-1}$ with~\eqref{eqn: Lq/p norm dual estimate 2 sliced} remaining valid. 

Multiplying \eqref{eqn: selection estimate} by $\zeta$ then integrating, we find
\begin{align*}
&\int_{\S^{n-1}}\zeta(\omega)
\left(-\int_{\R}f_\omega d\rad^\omega_\sharp\mu
-\int_{\R}
f_\omega^{\dist_\R^p}d\rad^\omega_\sharp\nu\right)d\sigma_{n-1}(\omega)\\
&=\int_{\S^{n-1}}\zeta(\omega)
\left(-\int_{\R^n}f_\omega(\langle x,\omega\rangle)d\mu(x)
-\int_{\R^n}
f_\omega^{\dist_\R^p}(\langle y,\omega\rangle)d\nu(y)\right)d\sigma_{n-1}(\omega)\\
 &\geq \int_{\S^{n-1}}\zeta(\omega)\mk_p^{\R}(\rad^\omega_\sharp\mu, \rad^\omega_\sharp\nu)^pd\sigma_{n-1}(\omega)-3\varepsilon\\
 &=\mk_{p, q}(\mu, \nu)^p-5\varepsilon,
\end{align*}
where in the third line we have used that $\lVert \zeta\rVert_{L^1(\sigma_{n-1})}\leq \lVert \zeta\rVert_{L^{r'}(\sigma_{n-1})}\leq 1$. Since $\varepsilon>0$ is arbitrary, this finishes the proof.
\end{proof}
\begin{ack} 
The authors would like to thank Guillaume Carlier, Wilfrid Gangbo, Quentin M{\'e}rigot, Brendan Pass, and Dejan Slep{\v c}ev for fruitful discussions,  Felix Otto for pointing out the geodesic nature of the sliced metrics when $p=1$, and March Boedihardjo for pointing out that $\mk_{1, \infty}$ is not bi-Lipschitz equivalent to $\mk_1^{\R^n}$. The authors would also like to thank Kota Matsui for bringing this topic to their attention.

JK was supported in part by National Science Foundation grants DMS-2000128 and DMS-2246606.

AT was supported in part by JSPS KAKENHI Grant Number 19H01786, 19K03494.
\end{ack}

\medskip

\bibliography{Wpq.bib}
\bibliographystyle{alpha}
\end{document}